\theoremstyle{plain}
\newtheorem{thm}{Theorem}\theoremstyle{definition}
\theoremstyle{remark}
\newtheorem{rem}[thm]{Remark}\theoremstyle{plain}
\newtheorem{prop}[thm]{Proposition}\theoremstyle{plain}
\newtheorem{lem}[thm]{Lemma}\theoremstyle{plain}
\theoremstyle{definition}
\theoremstyle{remark}
\newtheorem*{rem*}{Remark}\theoremstyle{definition}
\theoremstyle{plain}
\newcommand{\N}{\mathbb{N}}
\newcommand{\R}{{\mathbb{R}}}
\newcommand{\C}{{\mathbb{C}}}
\newcommand{\ii}{{\rm i}}
\newcommand{\diag}{\mathop\mathrm{diag}\nolimits}
\newcommand{\spn}{\mathop\mathrm{span}\nolimits}
\renewcommand{\Re}{\mathop\mathrm{Re}\nolimits}
\renewcommand{\Im}{\mathop\mathrm{Im}\nolimits}
\newcommand{\rank}{\mathop\mathrm{rank}\nolimits}
\newcommand{\Res}{\mathop\mathrm{Res}\nolimits}
\newcommand{\Ch}{\mathop\mathrm{Comm}_{H}\nolimits}
\begin{document}
\title{On Hankel matrices commuting with Jacobi matrices from the Askey scheme}
\author{František Štampach}
\address{{[}František Štampach{]} Department of Applied Mathematics, Faculty
of Information Technology, Czech Technical University in~Prague,
Thákurova~9, 160~00 Praha, Czech Republic}
\email{stampfra@fit.cvut.cz}
\author{Pavel Š\v{t}oví\v{c}ek}
\address{{[}Pavel Š\v{t}oví\v{c}ek{]} Department of Mathematics, Faculty of
Nuclear Sciences and Physical Engineering, Czech Technical University
in Prague, Trojanova~13, 12000 Praha, Czech Republic}
\email{stovicek@fjfi.cvut.cz}
\begin{abstract}
A complete characterization is provided of Hankel matrices commuting
with Jacobi matrices which correspond to hypergeometric orthogonal
polynomials from the Askey scheme. It follows, as the main result
of the paper, that the generalized Hilbert matrix is the only prominent
infinite-rank Hankel matrix which, if regarded as an operator on $\ell^{2}(\mathbb{N}_{0})$,
is diagonalizable by application of the commutator method with Jacobi
matrices from the mentioned families.
\end{abstract}

\subjclass[2000]{47B35,33C45}
\keywords{Hankel matrix, Hilbert matrix, diagonalization, commutator method,
Askey scheme, orthogonal polynomials}
\date{\today}
\maketitle

\section{Introduction and motivation}

Operators acting on~$\ell^{2}(\N_{0})$ determined by a Hankel matrix
\begin{equation}
\mathcal{H}=\begin{pmatrix}h_{0} & h_{1} & h_{2} & h_{3} & \dots\\
h_{1} & h_{2} & h_{3} & h_{4} & \dots\\
h_{2} & h_{3} & h_{4} & h_{5} & \dots\\
h_{3} & h_{4} & h_{5} & h_{6} & \dots\\
\vdots & \vdots & \vdots & \vdots & \ddots
\end{pmatrix}\!,\label{eq:def_hankel}
\end{equation}
with $h_{k}\in\R$, belong to one of the basic classes of linear operators
whose general theory has been systematically developed during the
last century~\cite{peller_03,power_82}. In contrast to other well
known classes such as Jacobi, Schr\"odinger, Toeplitz operators,
etc., there are only very few known concrete Hankel matrices that
admit an explicit diagonalization, i.e, whose spectral problem is
explicitly solvable. In fact, concerning Hankel matrices of infinite
rank, the authors were aware of only one such example - the famous
(generalized) Hilbert matrix - until recently.

The (generalized) Hilbert matrix is the Hankel matrix~\eqref{eq:def_hankel}
with
\[
h_{k}=\frac{1}{k+t},\quad k\in\N_{0},
\]
where $t\notin-\N_{0}$ is a parameter. The Hilbert matrix determines
a bounded operator $H_{t}$ on~$\ell^{2}(\N_{0})$ which can be,
quite surprisingly, diagonalized fully explicitly. In full generality,
the diagonalization of $H_{t}$ was obtained by Rosenblum in \cite{rosenblum_pams58a,rosenblum_pams58b}.
Preliminary results appeared even earlier \cite{magnus_ajm50,shanker_pcps49},
however. Rosenblum's original approach relies on an idea of the so-called
commutator method that was successfully applied to other operators
later on, see~\cite{kalvoda-stovicek_lma16,stampach-stovicek_jmaa19,yafaev_fap10}.

The commutator method relies on two main steps. First, one has to
find a commuting operator with simple spectrum and explicitly solvable
spectral problem and, second, determine a spectral mapping. In the
case of the Hilbert matrix, one may prefer to consider the transformed
Hankel matrix with the entries
\begin{equation}
\tilde{h}_{k}=\frac{(-1)^{k}}{k+t}\,,\,\ k\in\N_{0}.\label{eq:def_tilde_h}
\end{equation}
Obviously, the corresponding operator~$\tilde{H}_{t}$ is unitarily
equivalent to~$H_{t}$ via unitary transform $U=\diag(1,-1,1,-1,\dots)$.
The operator $\tilde{H}_{t}$ can be readily shown to commute with
a self-adjoint Jacobi operator $J_{t}$ given by a tridiagonal matrix
$\mathcal{J}_{t}$ with the entries
\begin{equation}
\left(\mathcal{J}_{t}\right)_{n,n}=2n(n+t),\quad\left(\mathcal{J}_{t}\right)_{n,n+1}=\left(\mathcal{J}_{t}\right)_{n+1,n}=(n+1)(n+t),\quad n\in\N_{0},\label{eq:def_J_t}
\end{equation}
which is the first ingredient for the commutator method to be applied.
Indeed, the spectrum of a self-adjoint Jacobi operator is always simple.
Moreover, up to an inessential addition of a multiple of the identity
operator, the Jacobi operator~$J_{t}$ corresponds to a particular
subfamily of orthogonal polynomials known as the continuous dual Hahn
polynomials listed in the Askey hypergeometric scheme~\cite{koekoek-etal_10}.
As a result, the spectral problem of~$J_{t}$ is explicitly solvable.
Next, by a general fact, $\tilde{H}_{t}=h(J_{t})$ for a Borel function
$h$. The second step of the commutator method is to determine $h$
which can be done with the aid of a known generating function formula
for the continuous dual Hahn polynomials, see~\cite{kalvoda-stovicek_lma16}
for details. The desired diagonalization of~$\tilde{H}_{t}$, and
hence also of $H_{t}$, then follows.

Of course, relying on the commutator method may seem rather restrictive.
On the other hand, whenever the method turned out to be applicable
it proved itself to be very powerful. In particular, as far as the
Hilbert matrix is concerned, the authors are not aware of an alternative
way of its diagonalization, other than a variant of the commutator
method.

The Askey scheme of hypergeometric orthogonal polynomials~\cite{koekoek-etal_10}
can be viewed as a rich source of Jacobi operators with an explicitly
solvable spectral problem. For instance, the Wilson polynomials standing
at the top of the Askey scheme depend on four additional parameters
and hence the same is true for the corresponding Jacobi matrix. Our
initial goal was to explore thoroughly this reservoir while trying
to find out whether the (generalized) Hilbert matrix is the only Hankel
matrix which can be diagonalized with the aid of the commutator method
with Jacobi matrices from the Askey scheme or whether there are other
infinite-rank Hankel matrices having such a property. And this question
was the main motivation for the current paper.

In this study, we distinguish nine families of Hermitian non-decomposable
Jacobi matrices
\begin{equation}
\mathcal{J}=\begin{pmatrix}\beta_{0} & \alpha_{0}\\
\alpha_{0} & \beta_{1} & \alpha_{1}\\
 & \alpha_{1} & \beta_{2} & \alpha_{2}\\
 &  & \alpha_{2} & \beta_{3} & \alpha_{3}\\
 &  &  & \ddots & \ddots & \ddots
\end{pmatrix}\!,\label{eq:def_jacobi}
\end{equation}
with $\alpha_{n}>0$ and $\beta_{n}\in\R$, corresponding to hypergeometric
orthogonal polynomials from the Askey scheme whose measure of orthogonality
is positive and supported on an infinite subset of~$\R$. These comprise
Wilson, Continuous dual Hahn, Continues Hahn, Jacobi, Meixner--Pollaczek,
Meixner, Laguerre, Charlier, and Hermite polynomials. On the other
hand, Racah, Hahn, dual Hahn, Krawtchouk, Pseudo-Jacobi, and Bessel
polynomials are excluded for the associated Jacobi matrices are finite
or non-Hermitian. For each family, our main theorem determines the
set of parameters for which there exists a nonzero Hankel matrix commuting
with the Jacobi matrix in question. In addition, in these quite rare
situations, the space of commuting Hankel matrices is always of dimension~$2$ with an explicitly described basis; see Theorem~\ref{thm:main}
below.

It may also seem that the set of Jacobi matrices, which we restrict
our analysis to, is too special. However, from the view point of the
commutator method, whose principle ingredient is always a commuting
operator with an explicitly solvable spectral problem, this restriction
is reasonable. Naturally, it would be really interesting to have a
general characterization of Jacobi matrices commuting with a nontrivial
Hankel matrix. Such a problem was solved by Gr\"unbaum in \cite{grunbaum_laa81}
for finite Jacobi matrices commuting with Toeplitz matrices. However,
as far as semi-infinite Jacobi and Hankel matrices are concerned,
a solution of this problem seems to be out of reach at the moment.

This paper deals with the commutation equation with semi-infinite
matrices on the algebraic level only. Passing to operators acting
on~$\ell^{2}(\N_{0})$, one would have to additionally require the
columns (and hence the rows) of a Hankel matrix~$\mathcal{H}$ to
be square summable. In that case~$\mathcal{H}$ defines a densely
defined operator on~$\ell^{2}(\N_{0})$ with the canonical basis
in its domain. A consequence of our main theorem is that the (generalized)
Hilbert matrix is the only infinite-rank Hankel matrix which can be
diagonalized by application of the commutator method using Hermitian
non-decomposable Jacobi matrices from the Askey scheme. More precisely,
an immediate consequence of Theorem~1 is as follows: \vskip3pt \emph{Let
$\mathcal{H}$ be a Hankel matrix with $\ell^{2}$-columns and $\rank\mathcal{H}>1$
and $\mathcal{J}$ any Hermitian non-decomposable Jacobi matrix from
the Askey scheme (concretely specified below). If \linebreak $\mathcal{H}\mathcal{J}=\mathcal{J}\mathcal{H}$
then $\mathcal{H}$ is a scalar multiple of the Hankel matrix~$\tilde{\mathcal{H}}_{t}$
with entries determined in \eqref{eq:def_tilde_h} for some $t$.}
\vskip3pt This statement is an answer to our original question and
emphasizes even more the prominent role played by the Hilbert matrix.

In a bit more detail, our analysis reveals, too, that in all cases
when the prominent Hankel matrix~$\tilde{\mathcal{H}}_{t}$ commutes
with a Jacobi matrix $\mathcal{J}$ corresponding to orthogonal polynomials
from the Askey scheme the diagonal and the off-diagonal sequence of
$\mathcal{J}$ are polynomial functions of index of order 2 and 4,
respectively. There is another family of orthogonal polynomials known
as the Stieltjes--Carlitz polynomials with the corresponding Jacobi
matrix of the same type but not included in the hypergeometric Askey
scheme. Rather than to hypergeometric series these polynomials are
intimately related to the Jacobian elliptic functions. This observation
led the authors to apply the commutator method to Jacobi operators
associated with the Stieltjes--Carlitz polynomials. This study resulted
in a discovery of four new explicitly diagonalizable Hankel matrices.
Their diagonalization is treated in a separate paper~\cite{stampach-stovicek_inprep}.

The article is organized as follows. In Section~2, the basic definitions
of the considered Jacobi matrices from the Askey scheme as well as
restrictions on the involved parameters are recalled. Then the main
result is formulated in Theorem~\ref{thm:main}. Section~3 is devoted
to an auxiliary result used in several particular proofs, especially
in those corresponding to the most complicated cases. In Section~\ref{sec:proofs},
a proof of the main result is given. The mentioned families of orthogonal
polynomials are treated case by case starting from the most complicated
one (Wilson) and proceeding to simpler cases following the hierarchy
of the Askey scheme. A more sophisticated approach is used for the
first three families depending on 3 or 4 additional parameters while
the remaining cases are treated using a rather elementary computational
approach.

\section{Notation and statement of the main result}

\label{sec:main} In the notation, the Jacobi matrix associated with
a particular family of orthogonal polynomials is distinguished by
a corresponding superscript, i.e., we consider nine families of Jacobi
matrices
\[
\mathcal{J}=\mathcal{J}^{j},\quad\mbox{ for }j\in\{\text{W},\text{CdH},\text{CH},\text{J},\text{MP},\text{M},\text{L},\text{C},\text{H}\}.
\]
The entries of these matrices can depend on up to four parameters.
For the chosen parametrization, we strictly follow~\cite{koekoek-etal_10}.
The concrete definitions of the entries are listed below.

\vskip8pt I) The Jacobi matrix~$\mathcal{J}^{\text{W}}$ associated
with the \textbf{Wilson} polynomials:
\begin{equation}
\alpha_{n}^{\text{W}}=\alpha_{n}^{\text{W}}(a,b,c,d):=\sqrt{A_{n}^{\text{W}}C_{n+1}^{\text{W}}}\quad\mbox{ and }\quad\beta_{n}^{\text{W}}=\beta_{n}^{\text{W}}(a,b,c,d):=A_{n}^{\text{W}}+C_{n}^{\text{W}}-a^{2},\label{eq:def_alpha_beta_wilson}
\end{equation}
where
\begin{align}
A_{n}^{\text{W}}=A_{n}^{\text{W}}(a,b,c,d) & :=\frac{(n+a+b)(n+a+c)(n+a+d)(n+a+b+c+d-1)}{(2n+a+b+c+d-1)(2n+a+b+c+d)}\,,\label{eq:def_A_wilson}\\
C_{n}^{\text{W}}=C_{n}^{\text{W}}(a,b,c,d) & :=\frac{n(n+b+c-1)(n+b+d-1)(n+c+d-1)}{(2n+a+b+c+d-2)(2n+a+b+c+d-1)}\,.\label{eq:def_C_wilson}
\end{align}
Though not obvious at first glance in the case of $\beta_{n}^{\text{W}}$,
the coefficients $\alpha_{n}^{\text{W}}$ and $\beta_{n}^{\text{W}}$
are both symmetric functions of the parameters $a,b,c,d$. The parameters
$a,b,c,d\in\C$ are restricted so that one of the following holds: 
\begin{enumerate}
\item[1.]  $\Re a,\Re b,\Re c,\Re d>0$ and the non-real parameters occur in
complex conjugate pairs. 
\item[2.]  Up to a permutation of parameters, $a<0$ and $a+b$, $a+c$, $a+d$
positive or a pair of complex conjugates occur with positive real
parts. 
\end{enumerate}
\noindent \vskip8pt II) The Jacobi matrix~$\mathcal{J}^{\text{CdH}}$
associated with the \textbf{Continuous dual Hahn} polynomials:
\[
\alpha_{n}^{\text{CdH}}=\alpha_{n}^{\text{CdH}}(a,b,c):=\sqrt{A_{n}^{\text{CdH}}C_{n+1}^{\text{CdH}}}\quad\mbox{ and }\quad\beta_{n}^{\text{CdH}}=\beta_{n}^{\text{CdH}}(a,b,c):=A_{n}^{\text{CdH}}+C_{n}^{\text{CdH}}-a^{2},
\]
where
\begin{align*}
A_{n}^{\text{CdH}}=A_{n}^{\text{CdH}}(a,b,c) & :=(n+a+b)(n+a+c)\,,\\
C_{n}^{\text{CdH}}=C_{n}^{\text{CdH}}(a,b,c) & :=n(n+b+c-1)\,.
\end{align*}
Hence
\[
\alpha_{n}^{\text{CdH}}=\sqrt{(n+1)(n+a+b)(n+a+c)(n+b+c)}
\]
and
\[
\beta_{n}^{\text{CdH}}=2n^{2}+(2a+2b+2c-1)n+ab+ac+bc.
\]
Clearly, both $\alpha_{n}^{\text{CdH}}$ and $\beta_{n}^{\text{CdH}}$
are symmetric functions of~$a,b,c$. The parameters $a,b,c\in\C$
are restricted such that one of the following holds:
\begin{enumerate}
\item[1.]  $a,b,c$ are positive except possibly a pair of complex conjugates
with positive real parts. 
\item[2.]  Up to a permutation of parameters, $a<0$ and $a+b$, $a+c$, are
positive or a pair of complex conjugates with positive real parts.
\end{enumerate}
\noindent \vskip8pt III) The Jacobi matrix~$\mathcal{J}^{\text{CH}}$
associated with the \textbf{Continuous Hahn} polynomials:
\[
\alpha_{n}^{\text{CH}}=\alpha_{n}^{\text{CH}}(a,b,c,d):=\sqrt{-A_{n}^{\text{CH}}C_{n+1}^{\text{CH}}}\quad\mbox{ and }\quad\beta_{n}^{\text{CH}}=\beta_{n}^{\text{CH}}(a,b,c,d):=\mathrm{i}(A_{n}^{\text{CH}}+C_{n}^{\text{CH}}+a),
\]
where
\begin{align*}
A_{n}^{\text{CH}}=A_{n}^{\text{CH}}(a,b,c,d) & :=-\frac{(n+a+b+c+d-1)(n+a+c)(n+a+d)}{(2n+a+b+c+d-1)(2n+a+b+c+d)}\,,\\
C_{n}^{\text{CH}}=C_{n}^{\text{CH}}(a,b,c,d) & :=\frac{n(n+b+c-1)(n+b+d-1)}{(2n+a+b+c+d-2)(2n+a+b+c+d-1)}\,.
\end{align*}
In this case, $\alpha_{n}^{\text{CH}}$ and $\beta_{n}^{\text{CH}}$
are not symmetric functions of $a,b,c,d$ but remain invariant under
the permutations of $(a,b,c,d)$ which are composed of two $2$-cycles
($(b,a,d,c)$, $(c,d,a,b)$, $(d,c,b,a)$). The parameters are supposed
to be such that
\begin{equation}
c=\bar{a},\;d=\bar{b}\quad\mbox{ and }\quad\Re a>0,\;\Re b>0.\label{eq:param_assum_chahn}
\end{equation}
Then $-A_{n}^{\text{CH}}C_{n+1}^{\text{CH}}>0$ and $\beta_{n}\in\mathbb{R}$
for all $n\in\N_{0}$, indeed.

\vskip8pt IV) The Jacobi matrix $\mathcal{J}^{\text{J}}$ associated
with the \textbf{Jacobi} polynomials:
\begin{equation}
\alpha_{n}^{\text{J}}=\alpha_{n}^{\text{J}}(\alpha,\beta):=\sqrt{\frac{4(n+1)(n+\alpha+1)(n+\beta+1)(n+\alpha+\beta+1)}{(2n+\alpha+\beta+1)(2n+\alpha+\beta+2)^{2}(2n+\alpha+\beta+3)}}\label{eq:def_alpha_jacobi}
\end{equation}
and
\begin{equation}
\beta_{n}^{\text{J}}=\beta_{n}^{\text{J}}(\alpha,\beta):=\frac{\beta^{2}-\alpha^{2}}{(2n+\alpha+\beta)(2n+\alpha+\beta+2)},\label{eq:def_beta_jacobi}
\end{equation}
where $\alpha,\beta>-1$.

\noindent \vskip8pt V) The Jacobi matrix $\mathcal{J}^{\text{MP}}$
associated with the \textbf{Meixner--Polaczek} polynomials:
\begin{equation}
\alpha_{n}^{\text{MP}}=\alpha_{n}^{\text{MP}}(\lambda,\phi):=\frac{\sqrt{(n+1)(n+2\lambda)}}{2\sin\phi}\quad\mbox{ and }\quad\beta_{n}^{\text{MP}}=\beta_{n}^{\text{MP}}(\lambda,\phi):=-\frac{n+\lambda}{\tan\phi},\label{eq:def_alpha_beta_meixner-pollaczek}
\end{equation}
where $\lambda>0$ and $\phi\in(0,\pi)$. For $\phi=\pi/2$, $\beta_{n}^{\text{MP}}$
is to be understood as zero for all $n\in\N_{0}$.

\noindent \vskip8pt VI) The Jacobi matrix $\mathcal{J}^{\text{M}}$
associated with the \textbf{Meixner} polynomials:
\begin{equation}
\alpha_{n}^{\text{M}}=\alpha_{n}^{\text{M}}(c,\beta):=\frac{\sqrt{c(n+1)(n+\beta)}}{1-c}\quad\mbox{ and }\quad\beta_{n}^{\text{M}}=\beta_{n}^{\text{M}}(c,\beta):=\frac{n+(n+\beta)c}{1-c},\label{eq:def_alpha_beta_meixner}
\end{equation}
where $\beta>0$ and $c\in(0,1)$.

\noindent \vskip8pt VII) The Jacobi matrix $\mathcal{J}^{\text{L}}$
associated with the \textbf{Laguerre} polynomials:
\begin{equation}
\alpha_{n}^{\text{L}}=\alpha_{n}^{\text{L}}(\alpha):=\sqrt{(n+1)(n+\alpha+1)}\quad\mbox{ and }\quad\beta_{n}^{\text{L}}=\beta_{n}^{\text{L}}(\alpha):=2n+\alpha+1,\label{eq:def_alpha_beta_laguerre}
\end{equation}
where $\alpha>-1$.

\noindent \vskip8pt VIII) The Jacobi matrix $\mathcal{J}^{\text{C}}$
associated with the \textbf{Charlier} polynomials:
\begin{equation}
\alpha_{n}^{\text{C}}=\alpha_{n}^{\text{C}}(a):=\sqrt{a(n+1)}\quad\mbox{ and }\quad\beta_{n}^{\text{C}}=\beta_{n}^{\text{C}}(a):=n+a,\label{eq:def_alpha_beta_charlier}
\end{equation}
where $a>0$.

\noindent \vskip8pt IX) The Jacobi matrix $\mathcal{J}^{\text{H}}$
associated with the \textbf{Hermite} polynomials:
\begin{equation}
\alpha_{n}^{\text{H}}=\sqrt{\frac{n+1}{2}}\quad\mbox{ and }\quad\beta_{n}^{\text{H}}=0.\label{eq:def_alpha_beta_hermite}
\end{equation}

\noindent Note that for any $j\in\{\text{W},\text{CdH},\text{CH},\text{J},\text{MP},\text{M},\text{L},\text{C},\text{H}\}$,
$\alpha_{n}^{j}$ and $\beta_{n}^{j}$ may be both regarded as analytic
functions in~$n$ on a neighborhood of infinity. Hence, whenever
convenient, $\alpha_{n}^{j}$ and $\beta_{n}^{j}$ are extended to
non-integer values of~$n$.

Finally, by a \emph{commutant} of~$\mathcal{J}$, we mean the space
of semi-infinite matrices $\mathcal{A}\in\C^{\infty,\infty}$ commuting
with~$\mathcal{J}$, i.e, $\mathcal{AJ}=\mathcal{JA}$ entrywise.
Since $\mathcal{J}$ is banded the matrix products $\mathcal{AJ}$
and $\mathcal{JA}$ are well defined for any $\mathcal{A}\in\C^{\infty,\infty}$.
The special attention is paid to Hankel matrices commuting with a
given Jacobi matrix~$\mathcal{J}$, therefore we define the \emph{Hankel
commutant} of~$\mathcal{J}$ by
\[
\Ch(\mathcal{J}):=\{\mathcal{H}\in\C^{\infty,\infty}\mid\mathcal{JH}=\mathcal{HJ}\mbox{ and }\mathcal{H}\mbox{ is a Hankel matrix}\}.
\]
Clearly, $\Ch(\mathcal{J})$ is a linear subspace of~$\C^{\infty,\infty}$.

Our main result characterizes $\Ch(\mathcal{J}^{j})$ for $j\in\{\text{W},\text{CdH},\text{CH},\text{J},\text{MP},\text{M},\text{L},\text{C},\text{H}\}$.
Only in very particular cases, $\Ch(\mathcal{J}^{j})$ is nontrivial,
i.e., there exists a nonzero Hankel matrix commuting with a Jacobi
matrix from the Askey scheme.

\renewcommand{\theenumi}{\roman{enumi}}%

\begin{thm}\label{thm:main} For any $j\in\{\text{W},\text{CdH},\text{CH},\text{J},\text{MP},\text{M},\text{L},\text{C},\text{H}\}$,
the Hankel commutant\linebreak $\Ch(\mathcal{J}^{j})$ is either
trivial or a two-dimensional space. Moreover, $\dim\Ch(\mathcal{J}^{j})=2$
if and only if $\alpha_{n}^{j}$ and $\beta_{n}^{j}$ depend polynomially
on~$n$ and $\alpha_{-1}^{j}=0$.

Below, we characterize all situations when $\Ch(\mathcal{J}^{j})$
is nontrivial by specifying parameters case by case. In each case,
with $\Ch(\mathcal{J}^{j})$ being nontrivial, we determine two sequences,
$h^{(1)}$ and $h^{(2)}$, which define two Hankel matrices (as in~\eqref{eq:def_hankel})
that form a basis of $\Ch(\mathcal{J}^{j})$.
\begin{enumerate}
\item $\dim\Ch(\mathcal{J}^{\text{W}})=2$ if and only if, up to a permutation,
the parameters $a,b,c,d$ fulfill
\[
a=\frac{3}{4},\quad b=\frac{t}{2}+\frac{1}{4},\quad c=\frac{t}{2}-\frac{1}{4},\quad d=\frac{1}{4},
\]
for $t>0$, and if that is the case, we have
\[
h_{k}^{(1)}=\frac{(-1)^{k}}{k+t}\quad\mbox{ and }\quad h_{k}^{(2)}=(-1)^{k},\quad k\in\N_{0}.
\]
\item $\dim\Ch(\mathcal{J}^{\text{CdH}})=2$ if and only if, up to a permutation,
the parameters $a,b,c$ fulfill
\[
a=b=\frac{1}{2},\quad c=t-\frac{1}{2},
\]
for $t>0$, and $h^{(1)}$ and $h^{(2)}$ are as in the case~(i).
\item $\dim\Ch(\mathcal{J}^{\text{CH}})=2$ if and only if the parameters
$a,b,c,d$ fulfill
\[
a=\bar{c}=\frac{1}{4}+\ii t,\quad b=\bar{d}=\frac{3}{4}+\ii t,\quad\mbox{ or }\quad a=\bar{c}=\frac{3}{4}+\ii t,\quad b=\bar{d}=\frac{1}{4}+\ii t,
\]
for $t\in\R$, and if that is the case, we have
\[
h_{k}^{(1)}=\sin\frac{k\pi}{2}\quad\mbox{ and }\quad h_{k}^{(2)}=\cos\frac{k\pi}{2},\quad k\in\N_{0}.
\]
\item $\Ch(\mathcal{J}^{\text{J}})$ is trivial for all $\alpha,\beta>-1$.
\item $\dim\Ch(\mathcal{J}^{\text{MP}})=2$ if and only if $\lambda=1/2$
and $\phi\in(0,\pi)$ arbitrary. If so we have 
\[
h_{k}^{(1)}=\sin(k\phi)\quad\mbox{ and }\quad h_{k}^{(2)}=\cos(k\phi),\quad k\in\N_{0}.
\]
\item $\dim\Ch(\mathcal{J}^{\text{M}})=2$ if and only if $\beta=1$ and
$c\in(0,1)$ arbitrary. If so we have 
\[
h_{k}^{(1)}=(-1)^{k}c^{k/2}\quad\mbox{ and }\quad h_{k}^{(2)}=(-1)^{k}c^{-k/2},\quad k\in\N_{0}.
\]
\item $\dim\Ch(\mathcal{J}^{\text{L}})=2$ if and only if $\alpha=0$ and
we have
\[
h_{k}^{(1)}=(-1)^{k}\quad\mbox{ and }\quad h_{k}^{(2)}=(-1)^{k}k,\quad k\in\N_{0}.
\]
\item $\Ch(\mathcal{J}^{\text{C}})$ is trivial for all $a>0$.
\item $\dim\Ch(\mathcal{J}^{\text{H}})=2$ and $h^{(1)}$ and $h^{(2)}$
are as in the case~(iii).
\end{enumerate}
\end{thm}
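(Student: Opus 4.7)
The plan is to translate the matrix identity $\mathcal{J}\mathcal{H}=\mathcal{H}\mathcal{J}$ into a scalar linear system on the Hankel coefficients $\{h_{k}\}_{k\ge0}$. Writing out the $(m,n)$-entry of the commutator and using the tridiagonal shape of $\mathcal{J}$, one obtains
\[
(\alpha_{m}-\alpha_{n})\,h_{m+n+1}+(\beta_{m}-\beta_{n})\,h_{m+n}+(\alpha_{m-1}-\alpha_{n-1})\,h_{m+n-1}=0
\]
for all $m,n\in\N_{0}$, under the conventions $\alpha_{-1}=h_{-1}=0$. For each fixed $k$ this is a family of linear constraints on $(h_{k-1},h_{k},h_{k+1})$ indexed by the pairs $(m,n)$ with $m+n=k$, and the whole argument revolves around deciding when this family is degenerate enough to admit a two-dimensional solution space.

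For the sufficiency direction I would use the elementary observation that for a polynomial $p$ of degree at most~$2$ the symmetric divided difference $(p(m)-p(n))/(m-n)$ depends only on $m+n$. Hence whenever $\alpha_{n}^{j}$ and $\beta_{n}^{j}$ are polynomials in $n$ of degree at most~$2$ and $\alpha_{-1}^{j}=0$, the whole family of constraints collapses to a single three-term linear recurrence on $\{h_{k}\}$ whose solution space has dimension~$2$; a direct substitution then identifies the listed pairs $h^{(1)},h^{(2)}$ as a basis in each admissible case. Determining when the parametrized expressions for $\alpha_{n}^{j}$ and $\beta_{n}^{j}$ actually reduce to polynomials in $n$ is itself a Diophantine factorization problem: the rational denominators entering $A_{n}^{j},C_{n}^{j}$ must cancel and the product under the square root must become a perfect square in $n$. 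Working this out family by family yields exactly the distinguished parameter sets listed in the statement, while for Jacobi and Charlier no such reduction is possible and the commutant is trivial.

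The principal obstacle lies in the necessity direction: one must show that whenever $\alpha_{n}^{j}$ or $\beta_{n}^{j}$ fails to be polynomial in $n$, the family of constraints at some sufficiently large $k$ contains two linearly independent equations in $(h_{k-1},h_{k+1})$, which forces them to vanish and, by induction, kills the whole sequence. For the families depending on at most two parameters this can be settled by selecting two concrete pairs $(m,n)$ with $m+n=k$ and checking that the resulting $2\times2$ Sylvester-type determinant does not vanish identically. For Wilson, continuous dual Hahn, and continuous Hahn the off-diagonal entries are square roots of three- or four-parameter rational functions and the non-degeneracy becomes a delicate algebraic identity; this is where the auxiliary result of Section~3 is invoked, reducing the verification to a comparison of polynomial coefficients in~$n$ and allowing the parameters to be eliminated step by step. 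The genuinely hardest step is carrying this elimination through for the four-parameter Wilson family cleanly enough to isolate the single one-parameter set $a=3/4$, $b=t/2+1/4$, $c=t/2-1/4$, $d=1/4$, up to permutation.
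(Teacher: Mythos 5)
Your sufficiency direction matches the paper: for $\alpha_n^j,\beta_n^j$ polynomial of degree at most two the divided differences depend only on $m+n$, the whole system collapses to one three-term recurrence, and $\alpha_{-1}^j=0$ handles the boundary rows; the enumeration of polynomial parameter configurations is indeed the case-by-case elimination you describe (carried out in the paper via residue conditions at the poles of $(\alpha_n^{\text{W}})^2$ and the asymptotics of $\beta_n-\alpha_n-\alpha_{n-1}$).

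The necessity direction, however, has a genuine gap. Each constraint with $m+n=k$ is a linear relation on the \emph{triple} $(h_{k-1},h_k,h_{k+1})$, not on the pair $(h_{k-1},h_{k+1})$, so two independent equations never force vanishing; you would need three independent equations, i.e.\ the non-vanishing of the $3\times3$ determinant $D(m,n)$ built from three index shifts. That determinant argument is exactly what the paper uses, but only for the low-parameter families (Meixner--Pollaczek, Meixner, Laguerre, Charlier), where the leading coefficient of $D(m,n)$ in $n$ can be computed and shown nonzero unless the parameters are special. For Wilson, continuous dual Hahn, continuous Hahn, and Jacobi this is not how the proof goes, and Lemma~3 (the auxiliary result) does not ``eliminate the parameters'': it only guarantees that two of the $2\times2$ minors $\delta_1,\delta_3$ are nonzero for large $n$, which pins any putative solution to the ray spanned by $(\delta_1,\delta_2,\delta_3)$ rather than killing it. The contradiction then comes from a further asymptotic analysis that your proposal omits entirely: the ratio $h_{n+1}/h_n=\delta_1/\delta_2$ is meromorphic at infinity, its possible growth types are classified, compatibility with the recurrence forces $h_n=(-1)^n(n+1)^\sigma(1+O(1/n))$, and substituting this back yields the identity $\beta_m-\alpha_m-\alpha_{m-1}+\sigma(\sigma+1)/4+1/16=0$ for all $m$, which by a separate lemma is equivalent to polynomiality of $\alpha_n,\beta_n$ --- the desired contradiction. (For continuous Hahn and for one Jacobi subcase the endgame is instead a sign obstruction of the form $\lambda^2=-1$.) Without this mechanism, or some substitute for it, your plan cannot rule out non-polynomial parameter values in the three- and four-parameter families.
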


\renewcommand{\theenumi}{\arabic{enumi}}%

\begin{rem} Clearly, those restrictions of parameters which are dictated
by requiring the Jacobi matrices from the Askey scheme to be Hermitian
are not essential for every claim of Theorem~\ref{thm:main}. For
example, it follows from the claim~(ii) that the Jacobi matrix~$\mathcal{J}_{t}$
defined in~\eqref{eq:def_J_t} and the Hankel matrix~$\tilde{\mathcal{H}}_{t}$
whose entries are given by~\eqref{eq:def_tilde_h} commute for any
parameter $t>0$. Obviously this assertion can be extended to all
$t\in\C\setminus\left(-\N_{0}\right)$. \end{rem}

\section{An auxiliary result}

\label{sec:aux} The following lemma will be used repeatedly below.
Its statement can be of independent interest. It has been also applied
in~\cite{stampach-stovicek_inprep}.

\begin{lem}\label{lem:Mzw} Let $p$ and $q$ be complex functions
which are meromorphic in a neighborhood of $\infty$ and assume that
the order of the pole at $\infty$ equals $2$ for both of them. Further
let $\epsilon\in\mathbb{C}$, $\epsilon\neq0$, and put, for $z,w\in\mathbb{C}$
sufficiently large,
\[
M(z,w):=\left(\begin{array}{cc}
p(z+\epsilon)-p(w-\epsilon) & q(z+\epsilon)-q(w-\epsilon)\\
p(z-\epsilon)-p(w+\epsilon) & q(z-\epsilon)-q(w+\epsilon)
\end{array}\right)\!.
\]
Let us write the determinant of $M(z,w)$ in the form 
\[
\det M(z,w)=\left((z-w)^{2}-4\epsilon^{2}\right)\delta(z,w).
\]
If at least one of the functions $p(z)$ and $q(z)$ is not a polynomial
in $z$ of degree $2$ and the set of functions $\{1,p,q\}$ is linearly
independent, then one of the following two cases happens:

(i) for every $w\in\mathbb{C}$ sufficiently large there exists $\underset{z\to\infty}{\text{lim}}\delta(z,w)\in\mathbb{C}\setminus\{0\}$,

(ii) for every $w\in\mathbb{C}$ sufficiently large there exists $\underset{z\to\infty}{\text{lim}}z\delta(z,w)\in\mathbb{C}\setminus\{0\}$.\\
 Consequently, for every $w\in\mathbb{C}$ sufficiently large there
exists $R(w)>0$ such that for all $z\in\mathbb{C}$, $\left|z\right|>R(w)$,
the matrix $M(z,w)$ is regular. \end{lem}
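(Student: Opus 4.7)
The plan is to analyze the Laurent expansion of $\det M(z,w)$ at $z=\infty$ for fixed large $w$, and to read off the leading behaviour of $\delta(z,w)$ from it. Since each entry of $M(z,w)$ has a pole of order~$2$ in $z$, the determinant is \emph{a priori} $O(z^4)$. A direct computation will show, however, that the coefficients of $z^4$ and $z^3$ cancel, so
\[
\det M(z,w) = a_2(w)\, z^2 + a_1(w)\, z + O(1);
\]
dividing by $(z-w)^2 - 4\epsilon^2$ then gives
\[
\delta(z,w) = a_2(w) + \frac{a_1(w) + 2w\, a_2(w)}{z} + O\!\left(\frac{1}{z^2}\right)\!.
\]
Hence case~(i) reduces to showing $a_2(w)\ne 0$ for all sufficiently large $w$, and case~(ii) to showing $a_2\equiv 0$ together with $a_1(w)\ne 0$ for all sufficiently large $w$.

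To compute $a_2(w)$ explicitly, I would use the rearrangement
\[
\det M(z,w) = W(z) - W(w) - \Phi(z+\epsilon, w+\epsilon) + \Phi(z-\epsilon, w-\epsilon),
\]
with $\Phi(a,b):=p(a)q(b)-q(a)p(b)$ and $W(z):=\Phi(z+\epsilon, z-\epsilon)$. Writing $p(z)=p_2 z^2+p_1 z+p_0+\hat p(z)$ and $q(z)=q_2 z^2+q_1 z+q_0+\hat q(z)$ with $\hat p,\hat q=O(1/z)$ and extracting the $z^2$-coefficient, one arrives at
\[
a_2(w) = W_2 + F(w+\epsilon) - F(w-\epsilon),\qquad F(z):=q_2\, p(z)-p_2\, q(z).
\]
The $z^2$-terms in $F$ cancel, so $F$ has a pole of order at most $1$ at $\infty$; moreover $W_2=-2\epsilon f_1$ where $f_1$ is the $z$-coefficient of $F$. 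Expanding $F(w\pm\epsilon)$ in descending powers of $w$, the constant and linear contributions cancel against $W_2$, and one is left with
\[
a_2(w) = -2\epsilon \sum_{k\ge 1}\frac{k F_{-k}}{w^{k+1}} + (\text{higher order in } 1/w),
\]
where the $F_{-k}$ are the negative Laurent coefficients of $F$. Consequently $a_2\equiv 0$ iff $F$ is a polynomial of degree~$\le 1$, which, since $q_2\ne 0$, is equivalent to $p(z)=c\,q(z)+\alpha z+\beta$ for constants $c=p_2/q_2$, $\alpha$, $\beta$.

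In the generic sub-case some $F_{-k_0}\ne 0$, and the leading asymptotics $a_2(w)\sim -2\epsilon k_0 F_{-k_0}/w^{k_0+1}$ forces $a_2(w)\ne 0$ for all large $w$; this yields case~(i). In the complementary sub-case $p=cq+\alpha z+\beta$, linear independence of $\{1,p,q\}$ rules out $\alpha=0$ (which would give $p=cq+\beta$), and the assumption that at least one of $p,q$ is not a polynomial of degree~$2$, combined with $p-cq=\alpha z+\beta$ being polynomial, forces $\hat q\not\equiv 0$. A more delicate bookkeeping of the $z$-coefficient of $\det M$, exploiting the identity $f_1=\alpha q_2$ (which follows from $p=cq+\alpha z+\beta$) to cancel the contributions growing linearly in $w$, will yield
\[
a_1(w) = -\alpha\,\bigl[\hat q(w+\epsilon) - \hat q(w-\epsilon)\bigr],
\]
which by the same asymptotic argument as for $a_2$ is nonzero for all large $w$. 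This yields case~(ii).

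In either case $\det M(z,w)\ne 0$ for $|z|$ sufficiently large, which proves the final regularity assertion. The main obstacle will be the Laurent bookkeeping in the degenerate case $a_2\equiv 0$: several apparent $O(w)$ contributions to $a_1(w)$ conspire to cancel, and this cancellation relies on the precise identity $f_1=\alpha q_2$, which has to be tracked carefully through the calculation.
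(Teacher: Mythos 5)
Your argument is correct, and the two formulas you assert without full derivation do check out: with $F=q_2p-p_2q=f_1z+f_0+\hat F$ one indeed gets $a_2(w)=\hat F(w+\epsilon)-\hat F(w-\epsilon)$, and in the degenerate case $a_1(w)=-\alpha\bigl(\hat q(w+\epsilon)-\hat q(w-\epsilon)\bigr)$. Your route differs from the paper's in its computational organization. The paper inverts the variables ($u=1/z$, $v=1/w$), encodes the non-polynomial tails of $p,q$ through divided-difference kernels $P(x,y)$, $Q(x,y)$, and reads the two cases off a single closed formula for $(1-\epsilon^2u^2)(1-\epsilon^2v^2)\delta(1/u,1/v)$; the case split there is $P_0\neq Q_0$ versus $P_0=Q_0\neq0$, $p_{-1}\neq q_{-1}$, which under your normalization is exactly $\hat F\not\equiv0$ versus $\hat F\equiv0$, $\hat q\not\equiv0$, $\alpha\neq0$ --- so the underlying dichotomy and the key ``finite difference of a decaying holomorphic function vanishes iff the function does'' observation are the same. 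What your version buys is transparency: the antisymmetric decomposition $\det M=W(z)-W(w)-\Phi(z+\epsilon,w+\epsilon)+\Phi(z-\epsilon,w-\epsilon)$ isolates the coefficients $a_2(w)$, $a_1(w)$ cleanly, and the obstruction you worry about at the end (the ``delicate bookkeeping'' for $a_1$) in fact evaporates: since $p=cq+\alpha z+\beta$, subtracting $c$ times the second column of $M$ from the first gives $\det M_{p,q}=\det M_{\alpha z+\beta,\,q}$, whose first column is just $\alpha(z-w\pm2\epsilon)$, and the stated formula for $a_1(w)$ then drops out in two lines. I would recommend including that column reduction explicitly rather than tracking the identity $f_1=\alpha q_2$ through the general expansion.
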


\begin{proof} Without loss of generality we can assume that the leading
coefficient in the Laurent expansion about $\infty$ equals $1$ for
both $p$ and $q$. So we can write
\[
p(z)=z^{2}+\sum_{k=-1}^{\infty}p_{k}z^{-k},\ q(z)=z^{2}+\sum_{k=-1}^{\infty}q_{k}z^{-k}.
\]
Let
\[
P(x,y):=\sum_{n=1}^{\infty}p_{n}\sum_{j=0}^{n-1}x^{j}y^{n-j-1},\ Q(x,y):=\sum_{n=1}^{\infty}q_{n}\sum_{j=0}^{n-1}x^{j}y^{n-j-1},
\]
and
\[
P_{0}(y):=P(0,y)=\sum_{n=1}^{\infty}p_{n}y^{n-1},\ Q_{0}(y):=Q(0,y)=\sum_{n=1}^{\infty}q_{n}y^{n-1}.
\]
Note that
\begin{equation}
\frac{p(x)-p(y)}{x-y}=x+y+p_{-1}-\frac{1}{xy}\,P\!\left(\frac{1}{x},\frac{1}{y}\right)\!,\ \frac{q(x)-q(y)}{x-y}=x+y+q_{-1}-\frac{1}{xy}\,Q\!\left(\frac{1}{x},\frac{1}{y}\right)\!.\label{eq:p-diff-P}
\end{equation}

Clearly, $p$ and $q$ are polynomials of degree $2$ if and only
if $P_{0}(y)=Q_{0}(y)=0$. Furthermore, the functions $\{1,p,q\}$
are linearly dependent if and only if $p(z)-p_{0}=q(z)-q_{0}$ for
all sufficiently large $z$, and this happens if and only if $p_{-1}=q_{-1}$
and $P_{0}(y)=Q_{0}(y)$ (the latter equation can be equivalently
replaced by $P(x,y)=Q(x,y)$).

Thus we assume that either (1) $P_{0}(y)\neq Q_{0}(y)$ or (2) $P_{0}(y)=Q_{0}(y)\neq0$
(equivalently, $P(x,y)=Q(x,y)\neq0$) and $p_{-1}\neq q_{-1}$.

Put
\[
\tilde{\delta}(u,v):=\delta\!\left(\frac{1}{u},\frac{1}{v}\right)\!.
\]
We have to explore the properties of $\tilde{\delta}(u,v)$ for $u$
and $v$ sufficiently small. A straightforward computation based on
(\ref{eq:p-diff-P}) shows that
\begin{eqnarray}
 &  & \hskip-1.5em(1-\epsilon^{2}u^{2})(1-\epsilon^{2}v^{2})\tilde{\delta}(u,v)\nonumber \\
 &  & \hskip-1.5em=\,(u+v+p_{-1}uv)\nonumber \\
 &  & \quad\times\bigg((1-\epsilon u)(1+\epsilon v)Q\!\left(\frac{u}{1+\epsilon u},\frac{v}{1-\epsilon v}\right)-(1+\epsilon u)(1-\epsilon v)Q\!\left(\frac{u}{1-\epsilon u},\frac{v}{1+\epsilon v}\right)\!\bigg)\nonumber \\
 &  & -\,(u+v+q_{-1}uv)\nonumber \\
 &  & \quad\times\bigg((1-\epsilon u)(1+\epsilon v)P\!\left(\frac{u}{1+\epsilon u},\frac{v}{1-\epsilon v}\right)-(1+\epsilon u)(1-\epsilon v)P\!\left(\frac{u}{1-\epsilon u},\frac{v}{1+\epsilon v}\right)\!\bigg)\nonumber \\
 &  & +\,u^{2}v^{2}\Bigg(P\!\left(\frac{u}{1+\epsilon u},\frac{v}{1-\epsilon v}\right)Q\!\left(\frac{u}{1-\epsilon u},\frac{v}{1+\epsilon v}\right)\nonumber \\
 &  & \qquad\qquad-\,P\!\left(\frac{u}{1-\epsilon u},\frac{v}{1+\epsilon v}\right)Q\!\left(\frac{u}{1+\epsilon u},\frac{v}{1-\epsilon v}\right)\!\Bigg).\label{eq:delta_tilde}
\end{eqnarray}

We shall need the following simple observation. Let $f(u)$ be a holomorphic
function in a neighborhood of $0$ (and still assuming that $\epsilon\in\mathbb{C}$,
$\epsilon\neq0$). Then the function 
\[
(1+\epsilon u)f\!\left(\frac{u}{1-\epsilon u}\right)-(1-\epsilon u)f\!\left(\frac{u}{1+\epsilon u}\right)
\]
vanishes identically in a neighborhood of $0$ if and only if the
same is true for $f(u)$.

Indeed, suppose that $f(u)$ does not vanishes identically in a neighborhood
of $0$ and let $n\in\N_{0}$ be the multiplicity of the root of $f(u)$
at $u=0$. Then $f(u)=f_{n}u^{n}+O(u^{n+1})$ as $u\to0$ where $f_{n}\in\mathbb{C}$,
$f_{n}\neq0$, and one immediately finds that
\[
(1+\epsilon u)f\!\left(\frac{u}{1-\epsilon u}\right)-(1-\epsilon u)f\!\left(\frac{u}{1+\epsilon u}\right)=2\epsilon(n+1)f_{n}u^{n+1}+O(u^{n+2}).
\]

From (\ref{eq:delta_tilde}) it is seen that 
\begin{eqnarray}
(1-\epsilon^{2}u^{2})(1-\epsilon^{2}v^{2})\tilde{\delta}(u,v) & = & \Bigg(\!(1+\epsilon v)\left(Q_{0}\!\left(\frac{v}{1-\epsilon v}\right)-P_{0}\!\left(\frac{v}{1-\epsilon v}\right)\right)\nonumber \\
 &  & \quad-\,(1-\epsilon v)\left(Q_{0}\!\left(\frac{v}{1+\epsilon v}\right)-P_{0}\!\left(\frac{v}{1+\epsilon v}\right)\right)\!\Bigg)v\nonumber \\
\noalign{\smallskip} &  & +\,\Phi(u,v)u\label{eq:delta_tilde_u}
\end{eqnarray}
where $\Phi(u,v)$ is an analytic function for $u$, $v$ belonging
to a neighborhood of $0$. Hence, by the above observation, if $P_{0}(y)\neq Q_{0}(y)$
and $v\neq0$ is sufficiently small then the leading coefficient in
the asymptotic expansion of the RHS of (\ref{eq:delta_tilde_u}),
as $u\to0$, is nonzero.

Consider now the case $P_{0}(y)=Q_{0}(y)\neq0$ ($P(x,y)=Q(x,y)\neq0$)
and $p_{-1}\neq q_{-1}$. Then equation (\ref{eq:delta_tilde}) simplifies
to
\begin{eqnarray*}
 &  & \hskip-2em(1-\epsilon^{2}u^{2})(1-\epsilon^{2}v^{2})\tilde{\delta}(u,v)\\
 &  & \hskip-1em=(p_{-1}-q_{-1})uv\\
 &  & \times\bigg(\!(1-\epsilon u)(1+\epsilon v)P\!\left(\frac{u}{1+\epsilon u},\frac{v}{1-\epsilon v}\right)-(1+\epsilon u)(1-\epsilon v)P\!\left(\frac{u}{1-\epsilon u},\frac{v}{1+\epsilon v}\right)\!\bigg).
\end{eqnarray*}
From here it is seen that
\begin{eqnarray}
 &  & \hskip-2em(1-\epsilon^{2}u^{2})(1-\epsilon^{2}v^{2})\tilde{\delta}(u,v)\nonumber \\
 &  & =\,(p_{-1}-q_{-1})\bigg((1+\epsilon v)P_{0}\left(\frac{v}{1-\epsilon v}\right)-(1-\epsilon v)P_{0}\left(\frac{v}{1+\epsilon v}\right)\!\bigg)vu+\Psi(u,v)u^{2}\nonumber \\
\label{eq:delta_tilde0_u}
\end{eqnarray}
where $\Psi(u,v)$ is an analytic function for $u$, $v$ belonging
to a neighborhood of $0$.

Referring again to the above observation we conclude that the leading
coefficient in the asymptotic expansion of the RHS of (\ref{eq:delta_tilde0_u}),
as $u\to0$, is nonzero for all sufficiently small $v\neq0$. \end{proof}

\section{Proof of the main theorem}

\label{sec:proofs}

\subsection{General equations}

Consider semi-infinite matrices $\mathcal{J}$ and $\mathcal{H}$
indexed by $m,n\in\N_{0}$ where $\mathcal{J}$ is a Jacobi matrix~\eqref{eq:def_jacobi}
determined by two complex sequences $\{\alpha_{n}\}$ and $\{\beta_{n}\}$,
with $\alpha_{n}\neq0$ for all $n\geq0$, and $\mathcal{H}$ is a
Hankel matrix~\eqref{eq:def_hankel}. It is convenient and common
to set $\alpha_{-1}:=0$.

Matrices $\mathcal{H}$ and $\mathcal{J}$ commute if and only if
it holds true that
\begin{equation}
(\alpha_{n}-\alpha_{m})h_{n+m+1}+(\beta_{n}-\beta_{m})h_{n+m}+(\alpha_{n-1}-\alpha_{m-1})h_{n+m-1}=0,\label{eq:a_b_H}
\end{equation}
for all $m,n\in\N_{0}$ ($h_{-1}$ is arbitrary). In particular, letting
$m=0$ we have
\begin{equation}
(\alpha_{n}-\alpha_{0})h_{n+1}+(\beta_{n}-\beta_{0})h_{n}+\alpha_{n-1}h_{n-1}=0,\label{eq:H_descend}
\end{equation}
for all $n\in\N$. Taking into account the descending recurrence it
is clear that, for any $n\in\N_{0}$, if $h_{n}=h_{n+1}=0$, then
$h_{0}=h_{1}=\ldots=h_{n}=h_{n+1}=0$.

\subsection{Wilson}

We shall need the asymptotic expansions of $\alpha_{n}^{\text{W}}$
and $\beta_{n}^{\text{W}}$,
\begin{equation}
\alpha_{n}^{\text{W}}=\frac{n^{2}}{4}+\frac{sn}{4}+A_{0}+O\!\left(\frac{1}{n}\right)\!,\quad\beta_{n}^{\text{W}}=\frac{n^{2}}{2}+\frac{(s-1)n}{2}+B_{0}+O\!\left(\frac{1}{n}\right)\text{ }\ \text{as}\ n\to\infty,\label{eq:alp_bet_asympt_wilson}
\end{equation}
where $s:=a+b+c+d$ and $A_{0}$, $B_{0}$ are constants. Explicitly,
\begin{align*}
A_{0}= & \frac{1}{32}\,(-3+4a+4b+4c+4d-2a^{2}-2b^{2}-2c^{2}-2d^{2}+4ab+4ac+4ad+4bc\\
 & \hskip1.5em+4bd+4cd),\\
B_{0}= & \frac{1}{8}\,(-a^{2}-b^{2}-c^{2}-d^{2}+2ab+2ac+2ad+2bc+2bd+2cd).
\end{align*}

The following proposition is in fact an implication stated in Theorem~\ref{thm:main}
ad~(i). Its proof is more straightforward than that for the opposite
direction.

\begin{prop}\label{prop:polyn_impl_nontriv_wilson} If $\alpha_{n}^{\text{W}}$
and $\beta_{n}^{\text{W}}$ depend both polynomially on~$n$, then
$\Ch(\mathcal{J}^{\text{W}})$ is nontrivial if and only if $\alpha_{-1}^{\text{W}}=0$.
If so, $\Ch(\mathcal{J}^{\text{W}})=\spn\left(\mathcal{H}^{(1)},\mathcal{H}^{(2)}\right)$,
where $\mathcal{H}^{(1)}$ and $\mathcal{H}^{(2)}$ are Hankel matrices
determined by the sequences
\begin{equation}
h_{k}^{(1)}=\frac{(-1)^{k}}{k+s-1}\quad\mbox{ and }\quad h_{k}^{(2)}=(-1)^{k},\quad k\in\N_{0},\label{eq:sol_h_wilson}
\end{equation}
respectively. \end{prop}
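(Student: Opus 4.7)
The plan is to use the commutation identity~\eqref{eq:a_b_H} to collapse the two-index system into a single three-term recurrence in $k = n+m$, and then analyze separately the $m=0$ boundary equations, where the matrix convention $\alpha_{-1}:=0$ clashes with the value of the polynomial extension of $\alpha_{n}^{\text{W}}$ at $n=-1$. This clash is exactly what produces the case split in the statement.

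First, the asymptotic expansions~\eqref{eq:alp_bet_asympt_wilson} combined with polynomiality force
\[
\alpha_{n}^{\text{W}} = \frac{n^{2}}{4} + \frac{sn}{4} + A_{0}, \qquad \beta_{n}^{\text{W}} = \frac{n^{2}}{2} + \frac{(s-1)n}{2} + B_{0}
\]
identically in $n$. Since both are quadratic, every difference appearing in~\eqref{eq:a_b_H} factors through $n-m$, so for $n>m\geq 1$ one can divide by $(n-m)/4$ to obtain
\[
E_{k} := (k+s)h_{k+1} + 2(k+s-1)h_{k} + (k+s-2)h_{k-1} = 0, \qquad k = n+m.
\]
This forces $E_{k}=0$ for every $k\geq 3$. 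A direct expansion of the $m=0$ equation~\eqref{eq:H_descend}, remembering that the matrix convention sets $\alpha_{-1}:=0$ whereas the polynomial extension evaluates to $\alpha_{-1}^{\text{W}} = A_{0} - (s-1)/4$, yields the boundary identity
\[
\tfrac{n}{4}\, E_{n} + \alpha_{-1}^{\text{W}}\, h_{n-1} = 0, \qquad n\geq 1.
\]

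Next comes the case split. If $\alpha_{-1}^{\text{W}} \neq 0$, then for $n\geq 3$ the boundary identity combined with $E_{n}=0$ gives $h_{n-1}=0$, hence $h_{k}=0$ for all $k\geq 2$; the descending recurrence recorded after~\eqref{eq:H_descend} then propagates $h_{2}=h_{3}=0$ down to $h_{0}=h_{1}=0$, proving $\Ch(\mathcal{J}^{\text{W}})$ is trivial. If $\alpha_{-1}^{\text{W}} = 0$, the boundary identity reduces to $E_{n}=0$ for $n\geq 1$, so the entire commutation system collapses to this single second-order linear recurrence, whose solution space is automatically two-dimensional.

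Finally, to exhibit the explicit basis, substitute $h_{k} = (-1)^{k} g_{k}$ followed by $g_{k} = f_{k}/(k+s-1)$; the recurrence becomes the constant-coefficient equation $f_{k+1} - 2 f_{k} + f_{k-1} = 0$ with general solution $f_{k} = A + Bk$. Unwinding the substitutions gives $h_{k} = (-1)^{k}(A+Bk)/(k+s-1)$, which is an arbitrary linear combination of the sequences $h_{k}^{(1)}$ and $h_{k}^{(2)}$ in~\eqref{eq:sol_h_wilson}. The main point of care throughout is keeping the polynomial value $\alpha_{-1}^{\text{W}}$ apart from the matrix entry $\alpha_{-1}=0$; once that bookkeeping is done, the rest of the proof is straightforward algebra.
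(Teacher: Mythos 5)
Your proof is correct and follows essentially the same route as the paper: cancel the common factor $(n-m)$ in \eqref{eq:a_b_H} to collapse the system to the single recurrence $(k+s)h_{k+1}+2(k+s-1)h_{k}+(k+s-2)h_{k-1}=0$ and identify its two solutions. The only difference is that you spell out the $m=0$ boundary analysis (the clash between the matrix convention $\alpha_{-1}=0$ and the polynomial value $\alpha_{-1}^{\text{W}}$) and derive the explicit solutions by substitution, both of which the paper leaves as ``easy to see''; this is a welcome elaboration, not a deviation.
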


\begin{proof} Suppose $\alpha_{n}^{\text{W}}$ and $\beta_{n}^{\text{W}}$
are polynomials in~$n$. Then the form of the polynomials is seen
from the asymptotic formulas~\eqref{eq:alp_bet_asympt_wilson} in
which the Landau symbol becomes zero. Plugging these expressions into~\eqref{eq:a_b_H}
and canceling the common term $(n-m)$ one arrives at a recurrence
equation for the entries of the Hankel matrix~$\mathcal{H}$ in which
the indices occur in the combination $m+n$ only. Writing $k$ instead
of $m+n$, the recurrence reads
\[
(k+s)h_{k+1}+2(k+s-1)h_{k}+(k+s-2)h_{k-1}=0,\quad k\in\N.
\]
Its two linearly independent solutions are given by~\eqref{eq:sol_h_wilson}.
Consequently, any $\mathcal{H}\in\Ch(\mathcal{J}^{\text{W}})$ has
to be a linear combination of $\mathcal{H}^{(1)}$ and $\mathcal{H}^{(2)}$.
Moreover, it is easy to see that any such~$\mathcal{H}$, if nontrivial,
can commute with~$\mathcal{J}^{\text{W}}$ if and only if $\alpha_{-1}^{\text{W}}=0$.
\end{proof}

In order to complete the proof of Theorem~\ref{thm:main} ad (i)
we have to prove the implication opposite to that stated in Proposition
\ref{prop:polyn_impl_nontriv_wilson}. To this end, we need the following
equivalent condition for a polynomial dependence of $\alpha_{n}^{\text{W}}$
and $\beta_{n}^{\text{W}}$ on~$n$ to hold true.

\begin{lem}\label{lem:alp_bet_polyn_wilson} The following two statements
are equivalent:
\begin{enumerate}
\item $\alpha_{n}^{\text{W}}$ and $\beta_{n}^{\text{W}}$ depend both polynomially
on~$n\in\mathbb{N}_{0}$.
\item There exists a constant $\omega$ such that
\begin{equation}
\beta_{n}^{\text{W}}-\alpha_{n}^{\text{W}}-\alpha_{n-1}^{\text{W}}+\omega=0,\quad\forall n\in\mathbb{N}.\label{eq:beta_alpha_alpha_wilson}
\end{equation}
\end{enumerate}
Moreover, statement (2) can be true only if $\omega=1/16$. \end{lem}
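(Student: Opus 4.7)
The plan is to prove the two implications of the equivalence separately, and to pin down the value $\omega = 1/16$ by combining them.

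For the direction \emph{(1) $\Rightarrow$ (2)}, I would proceed by direct substitution. Under the assumption that $\alpha_{n}^{\text{W}}$ and $\beta_{n}^{\text{W}}$ are polynomials in $n$, the asymptotic formulas \eqref{eq:alp_bet_asympt_wilson} become exact, so
\[
\alpha_{n}^{\text{W}} = \tfrac{n^{2}}{4} + \tfrac{sn}{4} + A_{0}, \qquad \beta_{n}^{\text{W}} = \tfrac{n^{2}}{2} + \tfrac{(s-1)n}{2} + B_{0},
\]
with $s = a+b+c+d$. A short computation shows that the $n^{2}$- and $n$-terms in $\beta_{n}^{\text{W}} - \alpha_{n}^{\text{W}} - \alpha_{n-1}^{\text{W}}$ cancel, leaving the constant $B_{0} - 2A_{0} + s/4 - 1/4$. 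Writing $A_{0}$ and $B_{0}$ in terms of $s$ and $S_{2} := a^{2}+b^{2}+c^{2}+d^{2}$ via the identity $2(ab+ac+ad+bc+bd+cd) = s^{2} - S_{2}$, the $S_{2}$-dependent pieces cancel in the combination $B_{0} - 2A_{0} = 3/16 - s/4$, and the total simplifies to $-1/16$. Hence (2) holds with $\omega = 1/16$.

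For the converse \emph{(2) $\Rightarrow$ (1)}, the argument has two stages. The first stage establishes rationality of $\alpha_{n}^{\text{W}}$ in $n$. Shifting the relation $\alpha_{n}^{\text{W}} + \alpha_{n-1}^{\text{W}} = \beta_{n}^{\text{W}} + \omega$ by $n \mapsto n+1$ and subtracting gives
\[
\alpha_{n+1}^{\text{W}} - \alpha_{n-1}^{\text{W}} = \beta_{n+1}^{\text{W}} - \beta_{n}^{\text{W}},
\]
which is a rational function of $n$. Combined with the algebraic identity $(\alpha_{n+1}^{\text{W}})^{2} - (\alpha_{n-1}^{\text{W}})^{2} = A_{n+1}^{\text{W}} C_{n+2}^{\text{W}} - A_{n-1}^{\text{W}} C_{n}^{\text{W}}$ and a division, $\alpha_{n+1}^{\text{W}} + \alpha_{n-1}^{\text{W}}$ is also rational. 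Adding the two identities expresses $\alpha_{n+1}^{\text{W}}$, and hence $\alpha_{n}^{\text{W}}$, as a rational function of $n$.

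The second stage upgrades rationality to polynomiality. Since $(\alpha_{n}^{\text{W}})^{2} = A_{n}^{\text{W}} C_{n+1}^{\text{W}}$ has denominator $(2n+s-1)(2n+s)^{2}(2n+s+1)$, the odd-multiplicity factors $(2n+s\pm1)$ must divide the numerator $N(n)$ of $A_{n}^{\text{W}} C_{n+1}^{\text{W}}$, for otherwise $\alpha_{n}^{\text{W}}$ would have branch points and fail to be rational. Thus the denominator of $\alpha_{n}^{\text{W}}$ in lowest terms divides $(2n+s)$, and the only residual obstruction to polynomiality is a possible simple pole at $n = -s/2$. Residue matching in $\alpha_{n}^{\text{W}} + \alpha_{n-1}^{\text{W}} = \beta_{n}^{\text{W}} + \omega$ at $n = -s/2$ (where $\alpha_{n-1}^{\text{W}}$ is regular) and at $n = 1 - s/2$ (where $\alpha_{n}^{\text{W}}$ is regular) yields two expressions for the residue of $\alpha_{n}^{\text{W}}$ at $-s/2$ in terms of the residues of $A_{n}^{\text{W}}$ at $-s/2$ and of $C_{n}^{\text{W}}$ at $1-s/2$. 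A direct computation shows that these latter two residues are negatives of one another, so both must vanish. The vanishing condition reads $s = 2$ or one of $a+b = c+d$, $a+c = b+d$, $a+d = b+c$; in each such case two of the eight linear factors comprising $N(n)$ coincide with $(n+s/2)$, so $(n+s/2)^{2}$ divides $N(n)$, contradicting the assumption of a pole. Hence $\alpha_{n}^{\text{W}}$ is polynomial, and then $\beta_{n}^{\text{W}} = \alpha_{n}^{\text{W}} + \alpha_{n-1}^{\text{W}} - \omega$ is polynomial as well. The uniqueness of $\omega = 1/16$ follows by combining the two directions.

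The main obstacle I anticipate is the second stage of the converse. Rationality alone leaves room for a simple pole at $n = -s/2$, and a single residue equation from matching at just one point does not suffice to kill it. The key is to exploit both poles of $\beta_{n}^{\text{W}}$ simultaneously, together with the sign asymmetry between the residues of $A_{n}^{\text{W}}$ and $C_{n}^{\text{W}}$, so that the two residue equations become incompatible unless the residue itself vanishes; translating that vanishing into the parameter conditions listed above then forces $(n+s/2)^{2}$ into $N(n)$ and completes the argument.
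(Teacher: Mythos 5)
Your proof is correct, and for the harder implication (2)$\Rightarrow$(1) it takes a genuinely different route from the paper's. The paper eliminates the square roots by squaring the relation twice to get the rational identity \eqref{eq:A_C_id_wilson_inproof}, extracts two necessary parameter conditions \eqref{eq:1st_cond_wilson_inproof} and \eqref{eq:2st_cond_wilson_inproof} from the Laurent coefficients at the singular points $z_{1}=(3-s)/2$ and $z_{2}=(2-s)/2$, and then finishes by a four-case verification in which $\alpha_{n}^{\text{W}}$ and $\beta_{n}^{\text{W}}$ are computed explicitly and the remaining free parameters are fixed by a further asymptotic expansion at infinity. You instead first prove that $\alpha_{n}^{\text{W}}$ is itself a rational function of $n$ (by differencing the hypothesis and dividing the difference of squares by $\alpha_{n+1}^{\text{W}}-\alpha_{n-1}^{\text{W}}=\beta_{n+1}^{\text{W}}-\beta_{n}^{\text{W}}$, which is nonzero for large $n$), reduce everything to a single possible simple pole at $n=-s/2$, and kill it by the antisymmetry $\Res_{n=-s/2}A_{n}^{\text{W}}=-\Res_{n=1-s/2}C_{n}^{\text{W}}$ --- the same cancellation the paper records as $\Res_{n=z_{2}}A_{n-1}^{\text{W}}=-\Res_{n=z_{2}}C_{n}^{\text{W}}$. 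I checked the residue computation and the cancellation of $(2n+s)^{2}$ in each vanishing case; your argument is sound, avoids the case analysis entirely, and needs only the analogue of \eqref{eq:2st_cond_wilson_inproof}, not \eqref{eq:1st_cond_wilson_inproof}. What the paper's longer route buys is the explicit list of admissible parameter configurations, which Remark~\ref{eq:alp_bet_polyn_param_wilson} reuses to derive the parametrization in claim~(i) of Theorem~\ref{thm:main}; with your proof that classification would still need to be done separately, though your vanishing condition $\Res_{n=-s/2}A_{n}^{\text{W}}=0$ is exactly its starting point. Your derivation of $\omega=1/16$ by combining the two implications is also valid, whereas the paper reads the value off the constant term of the asymptotic expansion of $\beta_{n}^{\text{W}}-\alpha_{n}^{\text{W}}-\alpha_{n-1}^{\text{W}}$ at infinity.
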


\begin{proof} If $\alpha_{n}^{\text{W}}$ and $\beta_{n}^{\text{W}}$
depend both polynomially on~$n$ then the form of the polynomials
can be deduced from the asymptotic formulas~\eqref{eq:alp_bet_asympt_wilson}
in which the Landau symbols vanish identically. The implication (1)$\Rightarrow$(2)
is then a matter of a straightforward calculation showing that (\ref{eq:beta_alpha_alpha_wilson})
holds provided we let $\omega=1/16$.

The opposite implication (2)$\Rightarrow$(1) is more tedious. From
\eqref{eq:alp_bet_asympt_wilson} one finds that
\[
\beta_{n}^{\text{W}}-\alpha_{n}^{\text{W}}-\alpha_{n-1}^{\text{W}}+\omega=\omega-\frac{1}{16}\,+O\!\left(\frac{1}{n}\right)\text{ }\ \text{as}\ n\to\infty.
\]
Hence necessarily $\omega=1/16$.

Further we deduce from \eqref{eq:beta_alpha_alpha_wilson} some necessary
conditions on the parameters $a,b,c,d$. The equation implies that
\begin{equation}
\left(\left(\beta_{n}^{\text{W}}+\omega\right)^{2}-(\alpha_{n}^{\text{W}})^{2}-(\alpha_{n-1}^{\text{W}})^{2}\right)^{\!2}-4(\alpha_{n}^{\text{W}})^{2}(\alpha_{n-1}^{\text{W}})^{2}=0,\quad\forall n\in\N.\label{eq:A_C_id_wilson_inproof}
\end{equation}
We temporarily denote by $f(n)$ the left-hand side of~\eqref{eq:A_C_id_wilson_inproof}.
From \eqref{eq:def_alpha_beta_wilson}, \eqref{eq:def_A_wilson} and~\eqref{eq:def_C_wilson}
it is seen that $f(n)$ is a rational function of~$n$. Since $f(n)$
has infinitely many roots it vanishes identically as a complex function.
This is equivalent to saying that the rational function $f(n)$ has
no poles and vanishes at infinity.

Let us check possible poles of $f(n)$. From (\ref{eq:A_C_id_wilson_inproof})
and \eqref{eq:def_alpha_beta_wilson}, \eqref{eq:def_A_wilson}, \eqref{eq:def_C_wilson}
it is clear that the poles can occur only at the points $z_{j}=(4-j-s)/2$,
$j=1,2,3,4,5$. Here we still denote $s=a+b+c+d$. As a necessary
condition, we require that, looking at the Laurent expansion of $f(n)$
around each isolated singularity $z_{j}$, the coefficient corresponding
to the highest possible order of the pole that can occur in the expansion
vanishes. Doing so we shall confine ourselves only to points $z_{1}$
and $z_{2}$, however, since it turns out that inspection of the remaining
points does not lead to additional constraints.

As far as the singular point $z_{1}=(3-s)/2$ is concerned, this singularity
may occur only in the term $(\alpha_{n-1}^{\text{W}})^{2}$, and it
does not occur in $\beta_{n}^{\text{W}}$ and $(\alpha_{n}^{\text{W}})^{2}$.
By inspection of the left-hand side of~\eqref{eq:A_C_id_wilson_inproof}
one readily obtains a condition on the residue
\[
\Res_{n=z_{1}}(\alpha_{n-1}^{\text{W}})^{2}=0,\ \text{equivalently,\ }\Res_{n=z_{3}}(\alpha_{n}^{\text{W}})^{2}=0.
\]
Using the explicit definition of $(\alpha_{n}^{\text{W}})^{2}$ it
is straightforward to derive that the residue is zero if and only
if
\begin{equation}
\left((a+b-c-d)^{2}-1\right)\left((a-b+c-d)^{2}-1\right)\left((a-b-c+d)^{2}-1\right)(s-3)(s-1)=0.\label{eq:1st_cond_wilson_inproof}
\end{equation}

As for the singular point $z_{2}=(2-s)/2$, this pole may occurred
in $\beta_{n}^{\text{W}}$ with order $1$ and in $(\alpha_{n-1}^{\text{W}})^{2}$
with order $2$, but not in $(\alpha_{n}^{\text{W}})^{2}$. An inspection
of the left-hand side of~\eqref{eq:A_C_id_wilson_inproof} shows
that the coefficient standing at $(n-z_{2})^{-2}$ in the Laurent
expansion around $z_{2}$ of $(\beta_{n}^{\text{W}})^{2}-(\alpha_{n-1}^{\text{W}})^{2}$
must be zero. As one can check, $\Res_{n=z_{2}}A_{n-1}^{\text{W}}=-\Res_{n=z_{2}}C_{n}^{\text{W}}$
and therefore this condition reduces to
\[
\Res_{n=z_{2}}C_{n}^{\text{W}}=0.
\]
This is true if and only if
\begin{equation}
(a+b-c-d)(a-b+c-d)(a-b-c+d)(s-2)=0.\label{eq:2st_cond_wilson_inproof}
\end{equation}

Equations~\eqref{eq:1st_cond_wilson_inproof} and~\eqref{eq:2st_cond_wilson_inproof}
imply a finite number of constrains on the parameters $a,b,c,d$ which
can be discussed case by case. Given that $\alpha_{n}^{\text{W}}$
and $\beta_{n}^{\text{W}}$ are symmetric in $a,b,c,d$ it suffices
to consider the following four cases covering all possibilities up
to a permutation of the parameters:

(i)~$a+b+c+d=1$, $a+b-c-d=0$,

(ii)~$a+b+c+d=3$, $a+b-c-d=0$,

(iii)~$a+b-c-d=1$, $a+b+c+d=2$,

(iv)~$a+b-c-d=1$, $a-b+c-d=0$.\\
 In order to complete the discussion successfully we will also take
into account the asymptotic behavior of $\beta_{n}^{\text{W}}-\alpha_{n}^{\text{W}}-\alpha_{n-1}^{\text{W}}+1/16$
at infinity.

In case (i) we have $b=1/2-a$, $d=1/2-c$. Then a computation shows
that
\[
\beta_{n}^{\text{W}}-\alpha_{n}^{\text{W}}-\alpha_{n-1}^{\text{W}}+\frac{1}{16}=\frac{(4a-1)^{2}(4c-1)^{2}}{256}\,\frac{1}{n^{2}}+O\!\left(\frac{1}{n^{3}}\right)\ \text{as}\ n\to\infty.
\]
Hence we have to put either $a=1/4$ or $c=1/4$ but the two choices
differ just by a permutation of parameters. We obtain
\begin{align*}
\alpha_{n}^{\text{W}}\!\left(a,\frac{1}{2}-a,\frac{1}{4},\frac{1}{4}\right) & =\frac{1}{64}\,(4\,n+4\,a+1)(4\,n-4\,a+3),\\
\beta_{n}^{\text{W}}\!\left(a,\frac{1}{2}-a,\frac{1}{4},\frac{1}{4}\right) & =\frac{1}{32}\,(16n^{2}-16a^{2}+8a+1).
\end{align*}

In case (ii) we have $b=3/2-a$, $d=3/2-c$. Then a computation shows
that
\[
\beta_{n}^{\text{W}}-\alpha_{n}^{\text{W}}-\alpha_{n-1}^{\text{W}}+\frac{1}{16}=\frac{(4a-3)^{2}(4c-3)^{2}}{256}\,\frac{1}{n^{2}}+O\!\left(\frac{1}{n^{3}}\right)\ \text{as}\ n\to\infty.
\]
Hence we have to put either $a=3/4$ or $c=3/4$ but again the two
choices differ by a permutation of parameters. We obtain
\begin{align*}
\alpha_{n}^{\text{W}}\!\left(a,\frac{3}{2}-a,\frac{3}{4},\frac{3}{4}\right) & =\frac{1}{64}\,(4\,n+4\,a+3)(4\,n-4\,a+9),\\
\beta_{n}^{\text{W}}\!\left(a,\frac{3}{2}-a,\frac{3}{4},\frac{3}{4}\right) & =\frac{1}{32}\,(16n^{2}+32n-16a^{2}+24a+9).
\end{align*}

In case (iii) we have $b=3/2-a$, $d=1/2-c$. Then a computation shows
that 
\[
\beta_{n}^{\text{W}}-\alpha_{n}^{\text{W}}-\alpha_{n-1}^{\text{W}}+\frac{1}{16}=\frac{(4a-3)^{2}(4c-1)^{2}}{256}\,\frac{1}{n^{2}}+O\!\left(\frac{1}{n^{3}}\right)\ \text{as}\ n\to\infty.
\]
Now we have to distinguish two sub-cases. In sub-case (iiia) we let
$a=3/4$, $b=3/4$, $d=1/2-c$, and we obtain
\begin{align*}
\alpha_{n}^{\text{W}}\!\left(\frac{3}{4},\frac{3}{4},c,\frac{1}{2}-c\right) & =\frac{1}{64}\,(4n-4c+5)(4n+4c+3),\\
\beta_{n}^{\text{W}}\!\left(\frac{3}{4},\frac{3}{4},c,\frac{1}{2}-c\right) & =\frac{1}{32}\,(16n^{2}+16n-16c^{2}+8c+5).
\end{align*}
In sub-case (iiib) we let $b=3/2-a$, $c=1/4$, $d=1/4$, and we obtain
\begin{align*}
\alpha_{n}^{\text{W}}\!\left(a,\frac{3}{2}-a,\frac{1}{4},\frac{1}{4}\right) & =\frac{1}{64}\,(4n-4a+7)(4n+4a+1),\\
\beta_{n}^{\text{W}}\!\left(a,\frac{3}{2}-a,\frac{1}{4},\frac{1}{4}\right) & =\frac{1}{32}\,(16n^{2}+16n-16a^{2}+24a-3).
\end{align*}

In case (iv) we have $d=a-1/2$, $c=b-1/2$. Then a computation shows
that
\[
\beta_{n}^{\text{W}}-\alpha_{n}^{\text{W}}-\alpha_{n-1}^{\text{W}}+\frac{1}{16}=\frac{(4a-3)^{2}(4b-3)^{2}}{256}\,\frac{1}{n^{2}}+O\!\left(\frac{1}{n^{3}}\right)\ \text{as}\ n\to\infty.
\]
Hence we have to put either $a=3/4$ or $b=3/4$ but again the two
choices differ by a permutation of parameters. We obtain
\begin{align*}
\alpha_{n}^{\text{W}}\!\left(\frac{3}{4},b,b-\frac{1}{2},\frac{1}{4}\right) & =\frac{1}{8}\,(n+1)(2n+4b-1),\\
\beta_{n}^{\text{W}}\!\left(\frac{3}{4},b,b-\frac{1}{2},\frac{1}{4}\right) & =\frac{1}{16}\big(8n^{2}+4(4b-1)n+8b-3\big).
\end{align*}

We conclude that the four discussed cases show that whenever (\ref{eq:beta_alpha_alpha_wilson})
holds then $\alpha_{n}^{\text{W}}$ and $\beta_{n}^{\text{W}}$ are
polynomials in $n$. \end{proof}

\begin{rem}\label{eq:alp_bet_polyn_param_wilson} As a matter of
fact, in the second part of the proof of Lemma~\ref{lem:alp_bet_polyn_wilson}
we have found all configurations of the parameters, up to a permutation,
when $\alpha_{n}^{\text{W}}$ and $\beta_{n}^{\text{W}}$ are polynomials
in $n$. But not in all found cases the requirement is met that the
polynomial for $\alpha_{n}^{\text{W}}$ vanishes at $n=-1$. Imposing
in addition this requirement we are left with the following admissible
configurations of the parameters $(a,b,c,d)$, again up to a permutation:

(i)~$(-1/4,3/4,1/4,1/4,)$,

(ii)~$(1/4,5/4,3/4,3/4)$,

(iii)~$(3/4,3/4,1/4,1/4)$,

(iv)~$(3/4,b,b-1/2,1/4)$.\\
 But (i), (ii) and (iii) are particular cases of (iv), possibly up
to a permutation: (i) for $b=1/4$, (ii) for $b=5/4$, (iii) for $b=3/4$.
Moreover, taking the constrains on the parameters $a,b,c,d$ given
below equation~\eqref{eq:def_C_wilson} into account, case~(iv)
is admissible only if $b>1/4$. By writing $b=1/4+t/2$, for $t>0$,
we obtain the parametrization used in claim~(i) of Theorem~\ref{thm:main}.
\end{rem}

Now we are ready to prove the opposite implication of Theorem~\ref{thm:main}.

\begin{prop}\label{prop:nontriv_impl_polyn_wilson} If $\Ch(\mathcal{J}^{\text{W}})$
is nontrivial, then $\alpha_{n}^{\text{W}}$ and $\beta_{n}^{\text{W}}$
depend polynomially on~$n$ and $\alpha_{-1}^{\text{W}}=0$. \end{prop}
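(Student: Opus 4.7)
The plan is to establish the proposition in two stages. First I show: if $\mathcal{H}\in\Ch(\mathcal{J}^{\text{W}})$ is nonzero, then $\alpha_{n}^{\text{W}}$ and $\beta_{n}^{\text{W}}$ depend polynomially on~$n$. The remaining condition $\alpha_{-1}^{\text{W}}=0$ then follows immediately from Proposition~\ref{prop:polyn_impl_nontriv_wilson}.

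Two preparatory facts: from the expansion~\eqref{eq:alp_bet_asympt_wilson} the family $\{1,\alpha^{\text{W}},\beta^{\text{W}}\}$ is linearly independent (comparing the $n^{2}$- and $n$-coefficients $1/4,\,s/4$ and $1/2,\,(s-1)/2$ rules out any nontrivial linear relation), and $\alpha_{n}^{\text{W}}\neq\alpha_{0}^{\text{W}}$ for all sufficiently large~$n$. The descending-recurrence observation recorded right after~\eqref{eq:H_descend}, together with its forward counterpart (which is admissible thanks to the second observation), then forces $(h_{k-1},h_{k},h_{k+1})\neq0$ for every sufficiently large~$k$ whenever $\mathcal{H}\neq0$.

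The central tool is Lemma~\ref{lem:Mzw} with $\epsilon=1/2$. For admissible half-integers $z,w$ I write out the two instances of~\eqref{eq:a_b_H} at $(n,m)=(z+\epsilon,w-\epsilon)$ and $(n,m)=(z-\epsilon,w+\epsilon)$; both have $n+m=z+w=:k$ and involve only $(h_{k-1},h_{k},h_{k+1})$. Assembled as the two rows of a $2\times3$ coefficient matrix, the right $2\times2$ block (the columns multiplying $h_{k}$ and $h_{k+1}$) is precisely the matrix $M(z,w)$ of Lemma~\ref{lem:Mzw} applied with $(p,q)=(\beta^{\text{W}},\alpha^{\text{W}})$, while the $h_{k-1}$-column coincides with one column of that same lemma's matrix evaluated at the shifted arguments $(z-1,w-1)$ with $p=\alpha^{\text{W}}$. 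Assuming for contradiction that one of $\alpha^{\text{W}},\beta^{\text{W}}$ is not a polynomial in~$n$, in particular at least one of them is not a polynomial of degree~$2$, and Lemma~\ref{lem:Mzw} delivers $\det M(z,w)\neq0$ for $|z|$ large at any fixed large~$|w|$. Hence the $h_{k}$- and $h_{k+1}$-columns are independent, the kernel of the $2\times3$ matrix is one-dimensional, and $(h_{k-1},h_{k},h_{k+1})$ is pinned up to scalar by the cross product of the two rows; the three resulting $2\times2$ minors each factor, by Lemma~\ref{lem:Mzw}, as $\bigl((z-w)^{2}-1\bigr)$ times a $\delta$-function, and this common factor cancels from the projective ratios.

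The main obstacle is now converting this projective rigidity into a scalar identity. Because $(h_{k-1},h_{k},h_{k+1})$ depends only on $k$, the projective direction has to remain fixed as $(z,w)$ sweeps the one-parameter line $z+w=k$. Expanding the three surviving $\delta$-quotients asymptotically in~$z$ via alternatives~(i) and~(ii) of Lemma~\ref{lem:Mzw}, and pairing the result with the leading three-term recurrence
\[
(k+s)h_{k+1}+2(k+s-1)h_{k}+(k+s-2)h_{k-1}=0
\]
(read off from the polynomial part of~\eqref{eq:a_b_H} exactly as in the proof of Proposition~\ref{prop:polyn_impl_nontriv_wilson}), is expected to distil the scalar identity
\[
\beta_{n}^{\text{W}}-\alpha_{n}^{\text{W}}-\alpha_{n-1}^{\text{W}}+\omega=0,\qquad n\in\N,
\]
for an appropriate constant~$\omega$. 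This is condition~(2) of Lemma~\ref{lem:alp_bet_polyn_wilson}, which forces $\alpha^{\text{W}}$ and $\beta^{\text{W}}$ to be polynomial in~$n$---a contradiction. Given polynomial dependence, Proposition~\ref{prop:polyn_impl_nontriv_wilson} supplies $\alpha_{-1}^{\text{W}}=0$.
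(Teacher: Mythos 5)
Your overall strategy coincides with the paper's: argue by contradiction, use Lemma~\ref{lem:Mzw} on the $2\times2$ minors of the two stacked instances of~\eqref{eq:a_b_H} to pin down $(h_{k-1},h_{k},h_{k+1})$ projectively, and aim to land on the identity $\beta_{n}^{\text{W}}-\alpha_{n}^{\text{W}}-\alpha_{n-1}^{\text{W}}+\omega=0$ so that Lemma~\ref{lem:alp_bet_polyn_wilson} forces polynomiality. However, there is a genuine gap exactly at the step you flag with ``is expected to distil'': the passage from the projective rigidity to that scalar identity is the substantive core of the argument, and you do not carry it out. In the paper this requires (a) observing that $\psi(n)=h_{n+1}/h_{n}=\delta_{1}/\delta_{2}$ extends to a meromorphic function near infinity, (b) classifying its asymptotics into the three types $\lambda_{1}(1+O(1/n))$, $\lambda_{2}n(1+O(1/n))$, $\lambda_{3}n^{-1}(1+O(1/n))$, (c) ruling out the factorial growth and decay types against the asymptotic recurrence~\eqref{eq:H_eq_asympt} and solving $\lambda_{1}^{2}+2\lambda_{1}+1=0$, and (d) substituting the resulting ansatz $h_{n}=(-1)^{n}(n+1)^{\sigma}\varphi(n)$ back into the exact equation~\eqref{eq:a_b_H} and extracting the constant term of the expansion, which is what actually produces $\omega=\sigma(\sigma+1)/4+1/16$. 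None of steps (b)--(d) appear in your sketch, and without them the constant $\omega$ (which depends on the unknown exponent $\sigma$) has no provenance.

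A related flaw: you propose to pair the $\delta$-quotients with the recurrence $(k+s)h_{k+1}+2(k+s-1)h_{k}+(k+s-2)h_{k-1}=0$ ``read off from the polynomial part of~\eqref{eq:a_b_H}.'' In the contradiction regime where $\alpha_{n}^{\text{W}}$ or $\beta_{n}^{\text{W}}$ is not a polynomial, that exact recurrence is simply false; only the weaker asymptotic form $h_{n+1}+2(1+O(1/n))h_{n}+(1+O(1/n))h_{n-1}=0$ is available, and that alone determines only the leading ratio $-1$, not the identity of Lemma~\ref{lem:alp_bet_polyn_wilson}. Finally, a smaller omission: you verify linear independence only for $\{1,\alpha^{\text{W}},\beta^{\text{W}}\}$, whereas Lemma~\ref{lem:Mzw} must be invoked for each of the three minors, which requires independence of $\{\alpha_{n},\beta_{n},1\}$, $\{\beta_{n},\alpha_{n-1},1\}$ and $\{\alpha_{n},\alpha_{n-1},1\}$ separately (the last set is the one that could conceivably degenerate, since $\alpha_{n}$ and $\alpha_{n-1}$ share the same leading coefficient).
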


\begin{proof} We shall proceed by contradiction. Remember, however,
as observed in Proposition~\ref{prop:polyn_impl_nontriv_wilson},
if $\alpha_{n}^{\text{W}}$ and $\beta_{n}^{\text{W}}$ depend both
polynomially on~$n$ but $\alpha_{-1}^{\text{W}}\neq0$ then $\Ch(\mathcal{J}^{\text{W}})$
is trivial. Hence to get a contradiction we assume that $\Ch(\mathcal{J}^{\text{W}})$
is nontrivial and either $\alpha_{n}^{\text{W}}$ or $\beta_{n}^{\text{W}}$
is not a polynomial in $n$.

Furthermore, without loss of generality we can assume that entries
$h_{n}$ of a nonzero Hankel matrix $\mathcal{H}$ commuting with
$\mathcal{J}^{\text{W}}$ are real. We shall also make use of the
fact that $\alpha_{n}^{\text{W}}$ and $\beta_{n}^{\text{W}}$ may
be both regarded as analytic functions in~$n$ for $n$ sufficiently
large.

Along with (\ref{eq:a_b_H}) we consider the equation
\begin{equation}
(\alpha_{n-1}-\alpha_{m+1})h_{n+m+1}+(\beta_{n-1}-\beta_{m+1})h_{n+m}+(\alpha_{n-2}-\alpha_{m})h_{n+m-1}=0.\label{eq:Hprime}
\end{equation}
Let again $s:=a+b+c+d$. From the asymptotic behavior
\[
\alpha_{n}^{\text{W}}=\frac{n^{2}}{4}+\frac{sn}{4}+O(1),\;\beta_{n}^{\text{W}}=\frac{n^{2}}{2}+\frac{(s-1)n}{2}+O(1),\;\alpha_{n-1}^{\text{W}}=\frac{n^{2}}{4}+\frac{(s-2)n}{4}+O(1),
\]
as $n\to\infty$, it is clear that none of the sets of functions in
$n$, $\{\alpha_{n}^{\text{W}},\beta_{n}^{\text{W}},1\}$ or $\{\beta_{n}^{\text{W}},\alpha_{n-1}^{\text{W}},1\}$
or $\{\alpha_{n}^{\text{W}},\alpha_{n-1}^{\text{W}},1\}$, is linearly
dependent.

Let
\begin{align}
\delta_{1}(n,m) & :=\det\!\left(\begin{array}{cc}
\beta_{n}^{\text{W}}-\beta_{m}^{\text{W}} & \alpha_{n-1}^{\text{W}}-\alpha_{m-1}^{\text{W}}\\
\beta_{n-1}^{\text{W}}-\beta_{m+1}^{\text{W}} & \alpha_{n-2}^{\text{W}}-\alpha_{m}^{\text{W}}
\end{array}\right)\!,\nonumber \\
\delta_{2}(n,m) & :=-\det\!\left(\begin{array}{cc}
\alpha_{n}^{\text{W}}-\alpha_{m}^{\text{W}} & \alpha_{n-1}^{\text{W}}-\alpha_{m-1}^{\text{W}}\\
\alpha_{n-1}^{\text{W}}-\alpha_{m+1}^{\text{W}} & \alpha_{n-2}^{\text{W}}-\alpha_{m}^{\text{W}}
\end{array}\right)\!,\label{eq:def_delta123}\\
\delta_{3}(n,m) & :=\det\!\left(\begin{array}{cc}
\alpha_{n}^{\text{W}}-\alpha_{m}^{\text{W}} & \beta_{n}^{\text{W}}-\beta_{m}^{\text{W}}\\
\alpha_{n-1}^{\text{W}}-\alpha_{m+1}^{\text{W}} & \beta_{n-1}^{\text{W}}-\beta_{m+1}^{\text{W}}
\end{array}\right)\!.\nonumber 
\end{align}
According to Lemma~\ref{lem:Mzw}, for all $m$ sufficiently large
there exists $R_{m}\in\mathbb{N}$ such that for all $n\geq R_{m}$,
$\delta_{1}(n,m)\neq0$ and $\delta_{3}(n,m)\neq0$. Then, by equations
(\ref{eq:a_b_H}) and (\ref{eq:Hprime}), the vectors
\begin{equation}
(h_{n+m+1},h_{n+m},h_{n+m-1})\ \ \text{and}\ \ \big(\delta_{1}(n,m),\delta_{2}(n,m),\delta_{3}(n,m)\big)\label{eq:vecs_h_delta}
\end{equation}
are linearly dependent.

We can assume that $R_{m}$ is so large that $\alpha_{n}^{\text{W}}-\alpha_{m}^{\text{W}}>0$
for all $n\geq R_{m}$. Referring to (\ref{eq:a_b_H}) and (\ref{eq:H_descend}),
one can see that the former of the two vectors in (\ref{eq:vecs_h_delta})
is necessarily nonzero, too. Otherwise $h_{n}=0$ identically. Hence
$\delta_{3}(n,m)\neq0$ implies that $h_{n+m-1}\neq0$ for $n\geq R_{m}$.
This in turn implies that $\delta_{2}(n,m)\neq0$ for $n\geq R_{m}$.
Note that, consequently, $\alpha_{n}^{\text{W}}$ cannot be a polynomial
since otherwise $\delta_{2}(n,m)=0$ identically.

Fix sufficiently large $m\in\N_{0}$. Then for all $n\in\N_{0}$,
$n\geq m_{0}:=R_{m}+m$, it we have
\[
h_{n+1}=\psi(n)h_{n},\ \,\text{with}\ \text{ }\psi(n):=\frac{\delta_{1}(n-m,m)}{\delta_{2}(n-m,m)}\,.
\]
It is of importance that $\psi(n)$ can be regarded as a meromorphic
function of $n$ in a neighborhood of $\infty$. Particularly, $\psi(n)$
has an asymptotic expansion to all orders as $n\to\infty$.

In view of Lemma~\ref{lem:Mzw}, there are only three possible types
of asymptotic behavior of $\psi(n)$ as $n\to\infty$:
\begin{eqnarray*}
(\text{I}) &  & \psi(n)=\lambda_{1}\!\left(1+O\!\left(\frac{1}{n}\right)\!\right)\!,\\
(\text{II}) &  & \psi(n)=\lambda_{2}\,n\left(1+O\!\left(\frac{1}{n}\right)\!\right)\!,\\
(\text{III}) &  & \psi(n)=\frac{\lambda_{3}}{n}\!\left(1+O\!\left(\frac{1}{n}\right)\!\right)\!,
\end{eqnarray*}
where $\lambda_{j}\neq0$ for $j=1,2,3$. From here one can deduce
the asymptotic behavior of
\[
h_{n}=h_{m_{0}}\,\prod_{k=m_{0}}^{n-1}\psi(k).
\]
In case (I) we have
\[
h_{n}=c_{1}\lambda_{1}^{\,n}\,n^{\sigma_{1}}\left(1+O\!\left(\frac{1}{n}\right)\!\right)\text{ }\text{as}\ n\to\infty,
\]
for some $c_{1},\sigma_{1}\in\mathbb{R}$, $c_{1}\neq0$. In case
(II) we have
\[
h_{n}=c_{2}\lambda_{2}^{\,n}\,n!\,n^{\sigma_{2}}\left(1+O\!\left(\frac{1}{n}\right)\!\right)\text{ }\text{as}\ n\to\infty,
\]
for some $c_{2},\sigma_{2}\in\mathbb{R}$, $c_{2}\neq0$. In case
(III) we have
\[
h_{n}=\frac{c_{3}\lambda_{3}^{\,n}\,n^{\sigma_{3}}}{n!}\left(1+O\!\left(\frac{1}{n}\right)\!\right)\text{ }\text{as}\ n\to\infty,
\]
for some $c_{3},\sigma_{3}\in\mathbb{R}$, $c_{3}\neq0$.

Rewriting (\ref{eq:a_b_H}) and taking into the account the asymptotic
behavior of $\alpha_{n}^{\text{W}}$ and $\beta_{n}^{\text{W}}$,
we obtain
\begin{align}
 & h_{n+1}+\frac{\beta_{n-m}^{\text{W}}-\beta_{m}^{\text{W}}}{\alpha_{n-m}^{\text{W}}-\alpha_{m}^{\text{W}}}h_{n}+\frac{\alpha_{n-m-1}^{\text{W}}-\alpha_{m-1}^{\text{W}}}{\alpha_{n-m}^{\text{W}}-\alpha_{m}^{\text{W}}}h_{n-1}\nonumber \\
 & =h_{n+1}+2\!\left(1+O\!\left(\frac{1}{n}\right)\!\right)\!h_{n}+\left(1+O\!\left(\frac{1}{n}\right)\!\right)\!h_{n-1}=0.\label{eq:H_eq_asympt}
\end{align}

It is readily seen that the asymptotic behavior of $h_{n}$ of type
(II) and (III) is incompatible with (\ref{eq:H_eq_asympt}). Hence
the only admissible asymptotic behavior of $h_{n}$ is that of type
(I). Without loss of generality we can suppose that $c_{1}=1$. Moreover,
from (\ref{eq:H_eq_asympt}) it is also seen that $\lambda_{1}$ should
solve the equation $\lambda_{1}^{\,2}+2\lambda_{1}+1=0$ whence $\lambda_{1}=-1$.
Furthermore, for the sake of simplicity we will drop the index in
$\sigma_{1}$. Thus we obtain
\begin{equation}
h_{n}=(-1)^{n}(n+1)^{\sigma}\varphi(n)\label{eq:H_subst}
\end{equation}
where
\begin{equation}
\varphi(n)=1+O\!\left(\frac{1}{n}\right)\ \ \text{as}\ n\to\infty.\label{eq:phi_asympt}
\end{equation}
As a matter of fact, $\varphi(n)$ has an asymptotic expansion to
all orders as $n\to\infty$.

Plugging (\ref{eq:H_subst}) into (\ref{eq:a_b_H}) we obtain
\begin{eqnarray}
 &  & \hskip-5em(\alpha_{n}^{\text{W}}-\alpha_{m}^{\text{W}})\!\left(1+\frac{1}{n+1}\right)^{\!\sigma}\varphi(n+1)-(\beta_{n}^{\text{W}}-\beta_{m}^{\text{W}})\varphi(n)\nonumber \\
 &  & \hskip8em+\,(\alpha_{n-1}^{\text{W}}-\alpha_{m-1}^{\text{W}})\!\left(1-\frac{1}{n+1}\right)^{\!\sigma}\varphi(n-1)=0.\label{eq:H_phi}
\end{eqnarray}

The asymptotic expansion of the LHS of~(\ref{eq:H_phi}) as $n\to\infty$,
with $m$ being fixed but otherwise arbitrary, while taking into account
(\ref{eq:phi_asympt}) and \eqref{eq:alp_bet_asympt_wilson} and substituting
for $s$, $A_{0}$, $B_{0}$, yields the expression
\[
\beta_{m}^{\text{W}}-\alpha_{m}^{\text{W}}-\alpha_{m-1}^{\text{W}}+\frac{\sigma(\sigma+1)}{4}+\frac{1}{16}+O\!\left(\frac{1}{n}\right)\!.
\]
Hence
\[
\beta_{m}^{\text{W}}-\alpha_{m}^{\text{W}}-\alpha_{m-1}^{\text{W}}+\frac{\sigma(\sigma+1)}{4}+\frac{1}{16}=0,\text{ }\ \text{for}\ \text{all}\ m\in\N_{0}.
\]
Lemma~\ref{lem:alp_bet_polyn_wilson} implies that $\beta_{n}^{\text{W}}$
and $\alpha_{n}^{\text{W}}$ depend both polynomially on~$n$, a
contradiction. \end{proof}

In summary, as far as the Jacobi matrix $\mathcal{J}^{W}$ is concerned,
the equivalence stated in Theorem~\ref{thm:main} ad~(i) means two
implications which are established by Propositions~\ref{prop:polyn_impl_nontriv_wilson}
and~\ref{prop:nontriv_impl_polyn_wilson}. Along with Remark~\ref{eq:alp_bet_polyn_param_wilson}
this concludes the proof of Theorem~\ref{thm:main} ad~(i).

\subsection{Continuous dual Hahn}

The approach used in the case of Wilson polynomials can be also applied
in the case of Continuous dual Hahn polynomials with no essential
difference. Due to the considerably simpler form of the Jacobi parameters~$\alpha_{n}^{\text{CdH}}$
and~$\beta_{n}^{\text{CdH}}$ (but the same order of the asymptotic
behavior for $n\to\infty$), some parts of the proof simplify significantly.
Observe that~$\beta_{n}^{\text{CdH}}$ is a polynomial in~$n$ for
all values of the parameters $a,b,c$. Namely, we have
\begin{equation}
\beta_{n}^{\text{CdH}}=2n^{2}+(2s-1)n+\tilde{s},\label{eq:bet_asympt_cdhahn}
\end{equation}
where we denote $s:=a+b+c$ and $\tilde{s}:=ab+ac+bc$. In addition,
\begin{equation}
\alpha_{n}^{\text{CdH}}=n^{2}+\frac{2s+1}{2}\,n+\frac{4s+4\tilde{s}-1}{8}+O\!\left(\frac{1}{n}\right)\!,\,\ \text{as}\ n\to\infty.\label{eq:alp_asympt_cdhahn}
\end{equation}

\begin{prop}\label{prop:polyn_impl_nontriv_cdhahn} If $\alpha_{n}^{\text{CdH}}$
depends polynomially on~$n$, then $\Ch(\mathcal{J}^{\text{CdH}})$
is nontrivial. More precisely, if that is the case, $\Ch(\mathcal{J}^{\text{CdH}})=\spn\left(\mathcal{H}^{(1)},\mathcal{H}^{(2)}\right)$,
where $\mathcal{H}^{(1)}$ and $\mathcal{H}^{(2)}$ are Hankel matrices
determined by the sequences
\begin{equation}
h_{k}^{(1)}=\frac{(-1)^{k}}{k+s-1/2}\quad\mbox{ and }\quad h_{k}^{(2)}=(-1)^{k},\quad k\in\N_{0},\label{eq:sol_h_cdhahn}
\end{equation}
respectively. \end{prop}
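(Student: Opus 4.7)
The plan closely follows Proposition~\ref{prop:polyn_impl_nontriv_wilson} and is in fact simpler, because $\beta_n^{\text{CdH}}$ is already a polynomial in~$n$ for every admissible choice of parameters by~\eqref{eq:bet_asympt_cdhahn}. Under the hypothesis that $\alpha_n^{\text{CdH}}$ is a polynomial in~$n$, both Jacobi entries are polynomial, and in particular the Landau symbol in~\eqref{eq:alp_asympt_cdhahn} must vanish identically, fixing
\[
\alpha_n^{\text{CdH}} = n^{2} + \frac{2s+1}{2}\,n + \frac{4s+4\tilde{s}-1}{8}
\]
as an exact identity in~$n$.

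First I would substitute the explicit polynomial forms of $\alpha_n^{\text{CdH}}$ and $\beta_n^{\text{CdH}}$ into the commutation equation~\eqref{eq:a_b_H}. Each of the three differences $\alpha_n^{\text{CdH}}-\alpha_m^{\text{CdH}}$, $\beta_n^{\text{CdH}}-\beta_m^{\text{CdH}}$, and $\alpha_{n-1}^{\text{CdH}}-\alpha_{m-1}^{\text{CdH}}$ factors as $(n-m)$ times an affine function of $n+m$. Cancelling $(n-m)$ and writing $k=n+m$, the commutation equation collapses to a three-term recurrence depending on $k$ alone,
\[
\bigl(k+s+\tfrac{1}{2}\bigr)h_{k+1} + 2\bigl(k+s-\tfrac{1}{2}\bigr)h_{k} + \bigl(k+s-\tfrac{3}{2}\bigr)h_{k-1} = 0, \quad k\in\N.
\]
A direct substitution then verifies that $h_k^{(2)}=(-1)^k$ and $h_k^{(1)}=(-1)^k/(k+s-1/2)$ are two linearly independent solutions, yielding the pair~\eqref{eq:sol_h_cdhahn}. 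In view of the descending recurrence~\eqref{eq:H_descend}, this gives the inclusion $\Ch(\mathcal{J}^{\text{CdH}})\subseteq\spn(\mathcal{H}^{(1)},\mathcal{H}^{(2)})$.

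Second, to establish the reverse inclusion and thereby confirm nontriviality, I would check that the recurrence, which was derived assuming $n\neq m$, also handles the boundary case $m=0,\,n\geq 1$ of~\eqref{eq:a_b_H}. This amounts to verifying that $\alpha_{-1}^{\text{CdH}}=0$. The key step is that polynomiality of $\alpha_n^{\text{CdH}}=\sqrt{(n+1)(n+a+b)(n+a+c)(n+b+c)}$ forces the polynomial under the square root to be a perfect square; enumerating the possible pairings of its four linear factors and exploiting symmetry in $a,b,c$ pins down, up to a permutation, $a=b=1/2$, so that $\alpha_n^{\text{CdH}}=(n+1)(n+c+1/2)$, which automatically vanishes at $n=-1$. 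This perfect-square analysis is the only mildly delicate part of the argument; all remaining computations are elementary.
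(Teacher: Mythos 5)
Your proposal is correct and takes essentially the same route as the paper: the Landau symbol in \eqref{eq:alp_asympt_cdhahn} is made to vanish, the common factor $(n-m)$ is cancelled in \eqref{eq:a_b_H} to produce the same three-term recurrence in $k=n+m$, and the two explicit solutions \eqref{eq:sol_h_cdhahn} are verified. Your perfect-square analysis forcing $a=b=1/2$ up to permutation (hence $\alpha_{-1}^{\text{CdH}}=0$) merely spells out a fact the paper's proof states without justification, so it is a welcome but not substantively different elaboration.
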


\begin{proof} Note that $\alpha_{n}^{\text{CdH}}$ depends polynomially
on~$n$ if and only if two of the parameters $a,b,c$ are equal to~$1/2$.
In this case, $\alpha_{-1}^{\text{CdH}}=0$ automatically. Assuming
that $\alpha_{n}^{\text{CdH}}$ is a polynomial in~$n$, the Landau
symbol in~\eqref{eq:alp_asympt_cdhahn} is identically vanishing.
Plugging this expression together with~\eqref{eq:bet_asympt_cdhahn}
into~\eqref{eq:a_b_H}, one infers that any $\mathcal{H}\in\Ch(\mathcal{J}^{\text{CdH}})$,
with $\mathcal{H}_{m,n}=h_{m+n}$, if and only if $\{h_{n}\}$ is
a solution of the three-term recurrence
\[
(k+s+1/2)h_{s+1}+(2k+2s-1)h_{k}+(k+s-3/2)h_{k-1}=0,\quad k\in\N.
\]
The two linearly independent solutions of the above recurrence are
given by~\eqref{eq:sol_h_cdhahn} and the proof follows. \end{proof}

\begin{lem}\label{lem:alp_bet_polyn_cdhahn} The following two statements
are equivalent: 
\begin{enumerate}
\item $\alpha_{n}^{\text{CdH}}$ depends polynomially on $n$.
\item There exists a constant $\omega$ such that
\begin{equation}
\beta_{n}^{\text{CdH}}-\alpha_{n}^{\text{CdH}}-\alpha_{n-1}^{\text{CdH}}+\omega=0,\quad\forall n\in\N_{0}.\label{eq:beta_alpha_alpha_cdhahn}
\end{equation}
\end{enumerate}
Moreover, statement (2) can be true only if $\omega=1/4$. \end{lem}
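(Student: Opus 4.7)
The plan is to mirror the proof of Lemma~\ref{lem:alp_bet_polyn_wilson}, exploiting the fact that in the Continuous dual Hahn setting both $\beta_{n}^{\text{CdH}}$ and $(\alpha_{n}^{\text{CdH}})^{2}$ are outright polynomials in $n$ for any admissible triple $(a,b,c)$, so no residue analysis at singular points is needed and the entire argument reduces to manipulations in $\mathbb{C}[n]$.

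The implication $(1)\Rightarrow(2)$ together with the normalization $\omega=1/4$ follows from a direct computation. If $\alpha_{n}^{\text{CdH}}$ depends polynomially on $n$, the $O(1/n)$ remainder in~\eqref{eq:alp_asympt_cdhahn} vanishes identically, fixing $\alpha_{n}^{\text{CdH}}=n^{2}+\frac{2s+1}{2}n+\frac{4s+4\tilde{s}-1}{8}$. Combining this with~\eqref{eq:bet_asympt_cdhahn} and simplifying $\beta_{n}^{\text{CdH}}-\alpha_{n}^{\text{CdH}}-\alpha_{n-1}^{\text{CdH}}$ yields the constant $-1/4$, establishing~\eqref{eq:beta_alpha_alpha_cdhahn} with $\omega=1/4$. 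Conversely, comparing constant terms in the asymptotic expansion of the left-hand side of~\eqref{eq:beta_alpha_alpha_cdhahn} as $n\to\infty$ shows that no other value of $\omega$ is admissible.

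For the opposite implication $(2)\Rightarrow(1)$ I would first square the relation $\alpha_{n}^{\text{CdH}}+\alpha_{n-1}^{\text{CdH}}=\beta_{n}^{\text{CdH}}+1/4$ to express $2\alpha_{n}^{\text{CdH}}\alpha_{n-1}^{\text{CdH}}$ as a polynomial in $n$, and then square once more to remove the remaining square roots, obtaining the polynomial identity
\[
\bigl((\beta_{n}^{\text{CdH}}+\tfrac{1}{4})^{2}-(\alpha_{n}^{\text{CdH}})^{2}-(\alpha_{n-1}^{\text{CdH}})^{2}\bigr)^{2}-4(\alpha_{n}^{\text{CdH}})^{2}(\alpha_{n-1}^{\text{CdH}})^{2}=0,
\]
which, since both sides are polynomials in $n$ coinciding on $\mathbb{N}$, must hold identically in $\mathbb{C}[n]$. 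The crux is to deduce from this identity that $p(n):=(\alpha_{n}^{\text{CdH}})^{2}=(n+1)(n+a+b)(n+a+c)(n+b+c)$ is a perfect square polynomial---which, by elementary inspection of its four linear factors, is equivalent to two of the parameters $a,b,c$ being equal to $1/2$, and in that case $\alpha_{n}^{\text{CdH}}$ is itself polynomial in $n$, establishing~(1). The identity forces $p(n)p(n-1)$ to be a square in $\mathbb{C}[n]$, so the multiset of its eight linear roots must group into pairs of equal values; examining the finitely many possible pairings of the roots $\{-1,-(a+b),-(a+c),-(b+c)\}$ of $p(n)$ with the shifted roots $\{0,1-a-b,1-a-c,1-b-c\}$ of $p(n-1)$, and discarding those inconsistent with the full identity rather than merely its squared-product consequence, leaves only the pairings in which the four roots of $p(n)$ coincide pairwise.

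The main obstacle will be precisely this last combinatorial step: several \emph{a priori} root-matching patterns, notably ``cross'' pairings involving an integer shift between a root of $p(n)$ and a root of $p(n-1)$, must be enumerated, and each candidate configuration substituted back to verify whether the polynomial identity genuinely holds or only its weaker squared-product consequence. The absence of rational poles in the CdH setting makes this bookkeeping considerably shorter than in the Wilson case, but the case analysis remains the most delicate ingredient of the proof.
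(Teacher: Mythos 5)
Your proof is correct, but the decisive implication $(2)\Rightarrow(1)$ is handled by a genuinely different route than in the paper. You transplant the Wilson-case strategy: square \eqref{eq:beta_alpha_alpha_cdhahn} twice to obtain the polynomial identity $\bigl((\beta_{n}^{\text{CdH}}+\omega)^{2}-p(n)-p(n-1)\bigr)^{2}=4\,p(n)p(n-1)$ with $p(n):=(\alpha_{n}^{\text{CdH}})^{2}$, conclude that $p(n)p(n-1)$ is a perfect square in $\C[n]$, and then analyze pairings of its eight roots. The combinatorial step you flag as the main obstacle is actually harmless: grouping the roots of $p$ into classes modulo integer shifts, the requirement that every value have even multiplicity in $p(n)p(n-1)$ reads $m(v)+m(v-1)\equiv 0\pmod 2$ for all $v$, and since the multiplicities vanish for large shifts this forces every $m(v)$ to be even, so $p$ itself is a perfect square and no back-substitution into the unsquared identity is needed. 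The paper avoids all of this by exploiting that $\beta_{n}^{\text{CdH}}$ is a polynomial for \emph{every} admissible $(a,b,c)$: it reads \eqref{eq:beta_alpha_alpha_cdhahn} as the recurrence $\alpha_{n}^{\text{CdH}}+\alpha_{n-1}^{\text{CdH}}=\phi_{n}$ with quadratic right-hand side, takes second differences to get $\alpha_{n+2}^{\text{CdH}}-\alpha_{n+1}^{\text{CdH}}-\alpha_{n}^{\text{CdH}}+\alpha_{n-1}^{\text{CdH}}=4$, solves this linear difference equation to obtain $\alpha_{n}^{\text{CdH}}=A+Bn+C(-1)^{n}+n^{2}$, and kills the oscillating term because $(\alpha_{n}^{\text{CdH}})^{2}$ is a polynomial, forcing $C=0$. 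The paper's argument is shorter and needs no factorization analysis; yours is more uniform with the Wilson case and makes the parallel between the two lemmas explicit. Your treatment of $(1)\Rightarrow(2)$ and of $\omega=1/4$ via the expansions \eqref{eq:bet_asympt_cdhahn}--\eqref{eq:alp_asympt_cdhahn} is equivalent to the paper's explicit computation with $a=b=1/2$.
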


\begin{proof} The implication (1)$\Rightarrow$(2): If $\alpha_{n}$
is a polynomial in $n$ then, up to a permutation of the parameters,
$a=b=1/2$, whence
\[
\alpha_{n}^{\text{CdH}}=(n+1)\!\left(n+\frac{1}{2}+c\right)
\]
and
\[
\beta_{n}^{\text{CdH}}=n\!\left(n-\frac{1}{2}+c\right)+(n+1)\!\left(n+\frac{1}{2}+c\right)-\frac{1}{4}=\alpha_{n-1}+\alpha_{n}-\frac{1}{4}\,.
\]

The implication (2)$\Rightarrow$(1): The equation \eqref{eq:beta_alpha_alpha_cdhahn}
means that the sequence $\{\alpha_{n}^{\text{CdH}}\}$ obeys a two-term
recurrence with a right-hand side,
\[
\alpha_{n}^{\text{CdH}}+\alpha_{n-1}^{\text{CdH}}=\phi_{n},
\]
where $\phi_{n}:=\beta_{n}^{\text{CdH}}+\omega$ is a polynomial in
$n$ of degree~$2$ with the leading coefficient equal to $2$. It
follows that
\[
\alpha_{n+2}^{\text{CdH}}-\alpha_{n+1}^{\text{CdH}}-\alpha_{n}^{\text{CdH}}+\alpha_{n-1}^{\text{CdH}}=\phi_{n+2}-2\phi_{n+1}+\phi_{n}=4.
\]
Hence $\alpha_{n}^{\text{CdH}}$ must be of the form 
\[
\alpha_{n}^{\text{CdH}}=A+Bn+C(-1)^{n}+n^{2}
\]
where $A$, $B$, $C$ are some constants. At the same time, we know
that the square of $\alpha_{n}^{\text{CdH}}$ is a polynomial in $n$
of degree~$4$. Whence
\[
C(A+Bn+n^{2})\!=0,\,\quad\forall n\in\mathbb{N}_{0}.
\]
Necessarily, $C=0$ and so $\alpha_{n}^{\text{CdH}}=A+Bn+n^{2}$.
\end{proof}

\begin{prop}\label{prop:nontriv_impl_polyn_cdhahn} If $\Ch(\mathcal{J}^{\text{CdH}})$
is nontrivial, then $\alpha_{n}^{\text{CdH}}$ depends polynomially
on~$n$. \end{prop}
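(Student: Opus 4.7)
The proof will proceed by contradiction along the same lines as Proposition~\ref{prop:nontriv_impl_polyn_wilson}, but simplified by the fact that $\beta_{n}^{\text{CdH}}$ is \emph{automatically} a polynomial in $n$ (see~\eqref{eq:bet_asympt_cdhahn}); thus only the non-polynomial character of $\alpha_{n}^{\text{CdH}}$ need be assumed and contradicted. As before, I would reduce to the case of real entries $h_{n}$ of a nonzero Hankel matrix $\mathcal{H}$ commuting with $\mathcal{J}^{\text{CdH}}$ and regard $\alpha_{n}^{\text{CdH}}$ as a meromorphic function of $n$ near $\infty$ via the explicit formula for $(\alpha_{n}^{\text{CdH}})^{2}$.

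The plan is to couple the commutation equation~\eqref{eq:a_b_H} with its shifted companion~\eqref{eq:Hprime} and form the three determinants $\delta_{j}(n,m)$ of~\eqref{eq:def_delta123} with $\alpha,\beta$ replaced by $\alpha^{\text{CdH}},\beta^{\text{CdH}}$. Using the leading asymptotics from~\eqref{eq:bet_asympt_cdhahn}--\eqref{eq:alp_asympt_cdhahn}, one verifies easily that each of the function triples $\{\alpha_{n}^{\text{CdH}},\beta_{n}^{\text{CdH}},1\}$, $\{\beta_{n}^{\text{CdH}},\alpha_{n-1}^{\text{CdH}},1\}$, $\{\alpha_{n}^{\text{CdH}},\alpha_{n-1}^{\text{CdH}},1\}$ is linearly independent, and that $\alpha_{n}^{\text{CdH}}$ is not a polynomial of degree $2$ by the standing assumption. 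Thus the hypotheses of Lemma~\ref{lem:Mzw} are met for all three determinants, so for $m$ fixed and sufficiently large $\delta_{1}(n,m),\delta_{3}(n,m)\neq 0$ for all large $n$, and the descending recurrence~\eqref{eq:H_descend} together with the non-vanishing of $\mathcal{H}$ forces $h_{n}\neq 0$ and hence $\delta_{2}(n,m)\neq 0$ as well. Consequently $h_{n+1}=\psi(n)h_{n}$ with $\psi(n):=\delta_{1}(n-m,m)/\delta_{2}(n-m,m)$ meromorphic at~$\infty$, and Lemma~\ref{lem:Mzw} pins down its asymptotics to the three cases (I), (II), (III) of Proposition~\ref{prop:nontriv_impl_polyn_wilson}.

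Next I would rewrite~\eqref{eq:a_b_H} using $(\beta_{n-m}^{\text{CdH}}-\beta_{m}^{\text{CdH}})/(\alpha_{n-m}^{\text{CdH}}-\alpha_{m}^{\text{CdH}})\to 2$ and $(\alpha_{n-m-1}^{\text{CdH}}-\alpha_{m-1}^{\text{CdH}})/(\alpha_{n-m}^{\text{CdH}}-\alpha_{m}^{\text{CdH}})\to 1$ to obtain the asymptotic recurrence in the form~\eqref{eq:H_eq_asympt}. Inserting the three possible asymptotic profiles for $h_{n}=h_{m_{0}}\prod\psi$ rules out cases (II) and (III) immediately and, in case (I), forces the characteristic root $\lambda_{1}=-1$. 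The solution therefore admits the ansatz $h_{n}=(-1)^{n}(n+1)^{\sigma}\varphi(n)$ with $\varphi(n)=1+O(1/n)$. Substituting this into~\eqref{eq:a_b_H}, I would expand the left-hand side to the constant order in $1/n$ with $m$ held fixed; the vanishing of this constant term should yield an identity of the shape
\[
\beta_{m}^{\text{CdH}}-\alpha_{m}^{\text{CdH}}-\alpha_{m-1}^{\text{CdH}}+\frac{\sigma(\sigma+1)}{4}+\frac{1}{4}=0,\quad\forall m\in\mathbb{N}_{0},
\]
where the additive constant $1/4$ is dictated by Lemma~\ref{lem:alp_bet_polyn_cdhahn}. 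Applying that lemma with $\omega=\sigma(\sigma+1)/4+1/4$ forces $\omega=1/4$ and, more importantly, the polynomial dependence of $\alpha_{n}^{\text{CdH}}$ on $n$, contradicting the starting assumption.

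The main obstacle, as in the Wilson case, is the asymptotic bookkeeping in the last step: one must compute the constant term of the expansion carefully enough to identify the value of $\omega$, and verify that it matches the specific value $1/4$ appearing in Lemma~\ref{lem:alp_bet_polyn_cdhahn} so that the lemma can be invoked. All remaining parts are direct transcriptions of the Wilson argument with simpler formulas, since $\beta_{n}^{\text{CdH}}$ is genuinely a polynomial from the outset.
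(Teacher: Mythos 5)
Your proposal follows exactly the paper's route: the paper's own proof simply states that it repeats the Wilson argument step for step (checking that the triples $\{\alpha_{n}^{\text{CdH}},\beta_{n}^{\text{CdH}},1\}$, $\{\beta_{n}^{\text{CdH}},\alpha_{n-1}^{\text{CdH}},1\}$, $\{\alpha_{n}^{\text{CdH}},\alpha_{n-1}^{\text{CdH}},1\}$ are linearly independent so that Lemma~\ref{lem:Mzw} applies) and then derives \eqref{eq:beta_alpha_alpha_cdhahn} and invokes Lemma~\ref{lem:alp_bet_polyn_cdhahn} for the contradiction, just as you do. The only quibble is numerical: since the leading coefficient of $\alpha_{n}^{\text{CdH}}$ is $1$ rather than $1/4$, the constant term of the expansion comes out as $\sigma(\sigma+1)+1/4$ rather than $\sigma(\sigma+1)/4+1/4$, but this is immaterial because Lemma~\ref{lem:alp_bet_polyn_cdhahn} only requires the existence of \emph{some} constant $\omega$.
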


\begin{proof} The proof is based on Lemma~\ref{lem:Mzw} and follows
the same steps as the proof of Proposition~\ref{prop:nontriv_impl_polyn_wilson}.
Therefore we just briefly address several points specific for this
case.

For a contradiction, we suppose that a nonzero Hankel matrix $\mathcal{H}$
commuting with $\mathcal{J}^{\text{CdH}}$ exists but $\alpha_{n}^{\text{CdH}}$
is not a polynomial in~$n$. It follows from~\eqref{eq:alp_asympt_cdhahn}
that 
\[
\alpha_{n-1}^{\text{CdH}}=n^{2}+\frac{2s-3}{2}\,n-\frac{4s-4\tilde{s}-3}{8}+O\!\left(\frac{1}{n}\right),\,\ \text{as}\ n\to\infty.
\]
This together with~\eqref{eq:bet_asympt_cdhahn} and~\eqref{eq:alp_asympt_cdhahn}
implies that none of the sets of functions in $n$, $\{\alpha_{n},\beta_{n},1\}$
or $\{\beta_{n},\alpha_{n-1},1\}$ or $\{\alpha_{n},\alpha_{n-1},1\}$,
is linearly dependent.

Now, using exactly the same reasoning as in the proof of Proposition~\ref{prop:nontriv_impl_polyn_wilson}
while considering $\alpha_{n}^{\text{CdH}}$ and $\beta_{n}^{\text{CdH}}$
instead of $\alpha_{n}^{\text{W}}$ and $\beta_{n}^{\text{W}}$, one
can argue that~\eqref{eq:beta_alpha_alpha_cdhahn} is necessarily
true. By Lemma~\ref{lem:alp_bet_polyn_cdhahn}, this means a contradiction.
\end{proof}

\subsection{Continuous Hahn}

One has, as $n\to\infty$,
\begin{equation}
\alpha_{n}^{\text{CH}}=\frac{n}{4}+\frac{a+b+c+d}{8}+O\!\left(\frac{1}{n}\right)\quad\mbox{ and }\quad\beta_{n}^{\text{CH}}=\frac{\ii(a+b-c-d)}{4}+O\!\left(\frac{1}{n}\right)\!.\label{eq:alp_bet_asympt_chahn}
\end{equation}

\begin{prop}\label{prop:polyn_impl_nontriv_chahn} If $\alpha_{n}^{\text{CH}}$
and $\beta_{n}^{\text{CH}}$ depend both polynomially on~$n$, then
$\Ch(\mathcal{J}^{\text{CH}})$ is nontrivial if and only if $\alpha_{-1}^{\text{CH}}=0$.
If so, $\Ch(\mathcal{J}^{\text{CH}})=\spn\left(\mathcal{H}^{(1)},\mathcal{H}^{(2)}\right)$,
where $\mathcal{H}^{(1)}$ and $\mathcal{H}^{(2)}$ are Hankel matrices
determined by the sequences
\begin{equation}
h_{k}^{(1)}=\sin\frac{k\pi}{2}\quad\mbox{ and }\quad h_{k}^{(2)}=\cos\frac{k\pi}{2},\quad k\in\N_{0},\label{eq:sol_h_chahn}
\end{equation}
respectively. \end{prop}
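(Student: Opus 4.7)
The plan is to mirror Proposition~\ref{prop:polyn_impl_nontriv_wilson}, taking advantage of the fact that the Continuous Hahn asymptotic expansions~\eqref{eq:alp_bet_asympt_chahn} are of much lower order than their Wilson counterparts. Under the standing hypothesis that both $\alpha_{n}^{\text{CH}}$ and $\beta_{n}^{\text{CH}}$ depend polynomially on~$n$, the Landau remainders in~\eqref{eq:alp_bet_asympt_chahn} must vanish identically, which forces
\[
\alpha_{n}^{\text{CH}} = \frac{n}{4} + \frac{a+b+c+d}{8}, \qquad \beta_{n}^{\text{CH}} = \frac{\ii(a+b-c-d)}{4}\,;
\]
in particular, $\alpha_{n}^{\text{CH}}$ is necessarily linear and $\beta_{n}^{\text{CH}}$ necessarily constant.

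Next, I would substitute these polynomial expressions into the general commutation identity~\eqref{eq:a_b_H}. The middle term drops out since $\beta_{n}^{\text{CH}} - \beta_{m}^{\text{CH}} = 0$, while the coefficients of $h_{n+m+1}$ and $h_{n+m-1}$ both reduce to $(n-m)/4$. Canceling the common factor $n-m$ (which is nonzero for $n \neq m$) collapses the equation to a single three-term recurrence in which only the combination $k := n+m$ appears, namely
\[
h_{k+1} + h_{k-1} = 0, \qquad k \in \N.
\]
The characteristic roots $\pm\ii$ yield the two linearly independent real solutions $h_{k}^{(1)} = \sin(k\pi/2)$ and $h_{k}^{(2)} = \cos(k\pi/2)$ listed in~\eqref{eq:sol_h_chahn}. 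Hence any $\mathcal{H} \in \Ch(\mathcal{J}^{\text{CH}})$ has to be a linear combination of the corresponding Hankel matrices $\mathcal{H}^{(1)}$ and $\mathcal{H}^{(2)}$.

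It remains to determine when such a candidate actually commutes with $\mathcal{J}^{\text{CH}}$. The only place where the bulk recurrence might fail to match the true commutation identity is the $m = 0$, $n = 1$ instance of~\eqref{eq:a_b_H} (equivalently the $n = 1$ case of~\eqref{eq:H_descend}), where the coefficient of $h_{0}$ uses the convention $\alpha_{-1}^{\text{CH}} = 0$ rather than the value of the polynomial extension at $n = -1$. Comparing this boundary relation with the bulk equation $h_{2} + h_{0} = 0$, a short calculation shows that the two are compatible for a nonzero solution if and only if the polynomial extension of $\alpha_{n}^{\text{CH}}$ also vanishes at $n = -1$, i.e., $\alpha_{-1}^{\text{CH}} = 0$. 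I do not foresee a serious obstacle in this argument; the only delicate point is the interplay between the polynomial value at $n = -1$ and the matrix convention $\alpha_{-1} = 0$ used throughout~\eqref{eq:a_b_H}, and it is precisely their coincidence that produces the two-dimensional Hankel commutant.
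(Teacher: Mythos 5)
Your argument is essentially the paper's own proof: reduce the polynomial case to the constant-coefficient recurrence $h_{k+1}+h_{k-1}=0$ via the vanishing of the Landau remainders in \eqref{eq:alp_bet_asympt_chahn}, and then settle nontriviality by the boundary equations where the convention $\alpha_{-1}=0$ enters. One small caution on the last step: the single instance $m=0$, $n=1$ of \eqref{eq:a_b_H} yields $(\alpha_{0}^{\text{CH}}-\tfrac14)h_{0}=0$ and so only forces $h_{0}=0$ when $\alpha_{-1}^{\text{CH}}\neq0$; to exclude the surviving solution with $h_{0}=0$, $h_{1}\neq0$ (i.e.\ $h^{(1)}$) you also need, say, $m=0$, $n=2$, which gives $(\alpha_{1}^{\text{CH}}-\tfrac12)h_{1}=0$ and hence $h_{1}=0$ under the same hypothesis, after which triviality of $\mathcal{H}$ follows.
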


\begin{proof} The proof is again analogous to the proof of Proposition~\ref{prop:polyn_impl_nontriv_wilson}.
Assuming that~$\alpha_{n}^{\text{CH}}$ and $\beta_{n}^{\text{CH}}$
are polynomials, we can deduce the form of the polynomials from \eqref{eq:alp_bet_asympt_chahn},
and the general equation~\eqref{eq:a_b_H} implies that any $\mathcal{H}\in\Ch(\mathcal{J}^{\text{CH}})$,
with entries $\mathcal{H}_{m,n}=h_{m+n}$, must fulfill the simple
recurrence
\[
h_{k+1}+h_{k-1}=0,\quad k\in\N,
\]
whose two linearly independent solutions are given by~\eqref{eq:sol_h_chahn}.
Now it suffices to notice that such~$\mathcal{H}$ is necessarily
trivial if $\alpha_{-1}^{\text{CH}}\neq0$. On the contrary, if $\alpha_{-1}^{\text{CH}}=0$
then any solution of the above recurrence defines a Hankel matrix
which commutes with~$\mathcal{J}^{\text{CH}}$. \end{proof}

\begin{prop}\label{prop:nontriv_impl_polyn_chahn} If $\Ch(\mathcal{J}^{\text{CH}})$
is nontrivial then $\alpha_{n}^{\text{CH}}$ depends polynomially
on~$n$. \end{prop}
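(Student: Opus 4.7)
The plan is to mirror the contradiction strategy used for Propositions~\ref{prop:nontriv_impl_polyn_wilson} and~\ref{prop:nontriv_impl_polyn_cdhahn}. Suppose $\mathcal{H}$ is a nonzero Hankel matrix with entries $\{h_n\}$ commuting with $\mathcal{J}^{\text{CH}}$ while $\alpha_n^{\text{CH}}$ is not a polynomial in~$n$. By the parameter restriction~\eqref{eq:param_assum_chahn} the matrix $\mathcal{J}^{\text{CH}}$ is real, so splitting $\mathcal{H}$ into real and imaginary parts I may assume $h_n \in \mathbb{R}$. The essential new feature is that here $\alpha_n^{\text{CH}}$ is of order~$1$ and $\beta_n^{\text{CH}}$ of order~$0$ at infinity (see~\eqref{eq:alp_bet_asympt_chahn}), so Lemma~\ref{lem:Mzw} does not apply verbatim and the asymptotics must be extracted by hand.

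First I would fix $m$ in~\eqref{eq:a_b_H} and divide by $\alpha_n^{\text{CH}} - \alpha_m^{\text{CH}} \sim (n-m)/4$. Using~\eqref{eq:alp_bet_asympt_chahn}, the coefficient of $h_n$ tends to $0$ while the coefficient of $h_{n-1}$ tends to $1$, so the limiting recurrence is $h_{n+1} + h_{n-1} = o(1)$. Combined with the non-degeneracy of the $2 \times 2$ systems formed from~\eqref{eq:a_b_H} and~\eqref{eq:Hprime}~--- which I would establish by a direct computation exploiting the linear independence of the triples $\{1, \alpha_n^{\text{CH}}, \alpha_{n-1}^{\text{CH}}\}$, $\{1, \beta_n^{\text{CH}}, \alpha_{n-1}^{\text{CH}}\}$, and $\{1, \alpha_n^{\text{CH}}, \beta_n^{\text{CH}}\}$, all readable off from~\eqref{eq:alp_bet_asympt_chahn} together with the assumption that $\alpha_n^{\text{CH}}$ is not polynomial~--- this yields the asymptotic form
\[
h_n = n^{\sigma}\bigl(A \cos(n\pi/2) + B \sin(n\pi/2)\bigr)\bigl(1 + O(1/n)\bigr)
\]
with some $\sigma \in \mathbb{R}$ and $(A,B) \neq (0,0)$.

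Next I would substitute this ansatz back into~\eqref{eq:a_b_H} and match the leading orders of $1/n$ as $n \to \infty$, keeping $m$ fixed but arbitrary. The two trigonometric components $\cos(n\pi/2)$ and $\sin(n\pi/2)$ produce separate identities on $(\alpha_m^{\text{CH}}, \beta_m^{\text{CH}}, \alpha_{m-1}^{\text{CH}})$ that must hold for all $m \in \N_0$. Inserting the explicit rational expressions for $\alpha_m^{\text{CH}}$ and $\beta_m^{\text{CH}}$ and inspecting residues at the candidate poles $m = (1-s)/2$, $(2-s)/2$ (with $s = a+b+c+d$) forces the numerator factors to cancel, in analogy with the case analysis in the proof of Lemma~\ref{lem:alp_bet_polyn_wilson}. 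This pins the parameters down to configurations for which $\alpha_n^{\text{CH}}$ is actually polynomial in~$n$, contradicting the standing assumption.

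The main obstacle is the first step: making the passage from the heuristic limiting recurrence $h_{n+1} + h_{n-1} = o(1)$ to a full asymptotic expansion of $h_n$ rigorous without appeal to Lemma~\ref{lem:Mzw}. A clean way around this is to express $h_{n+1}$ and $h_{n-1}$ via Cramer's rule applied jointly to~\eqref{eq:a_b_H} and~\eqref{eq:Hprime} as ratios of $2 \times 2$ determinants of functions meromorphic near $\infty$, and then to analyze their leading Laurent coefficients in the spirit of the proof of Lemma~\ref{lem:Mzw}. Once this asymptotic is available, the remainder of the argument reduces to the (admittedly careful) bookkeeping sketched above.
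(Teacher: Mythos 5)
Your setup and your instinct to work with the $2\times2$ determinants $\delta_1,\delta_2,\delta_3$ built from \eqref{eq:a_b_H} and \eqref{eq:Hprime} are the right ones, and you correctly note that Lemma~\ref{lem:Mzw} does not apply verbatim here. But there is a genuine gap at the centre of your argument: the asymptotic form $h_n = n^{\sigma}(A\cos(n\pi/2)+B\sin(n\pi/2))(1+O(1/n))$ is never established, and in fact it cannot be, because it is incompatible with the very Cramer's-rule analysis you propose as the way to make it rigorous. Once you know that $\delta_2(n,m)\neq 0$ for large $n$ (which does follow from $\alpha_n^{\text{CH}}$ not being a polynomial: writing $\alpha_n^{\text{CH}}=n/4+A_0+A(n)$ one finds $\delta_2(n,m)=\tfrac14\bigl(A(m+1)-2A(m)+A(m-1)\bigr)n+O(1)$, and the second difference of a nonzero tail $A(m)=A_km^{-k}+\cdots$ is nonzero for large $m$), the proportionality of $(h_{n+m+1},h_{n+m},h_{n+m-1})$ to $(\delta_1,\delta_2,\delta_3)$ forces $h_{n+1}/h_n\to\delta_1/\delta_2$, a \emph{finite nonzero real} limit $D$. (Here you also need that $\beta_n^{\text{CH}}$ is not constant; contrary to what you assert, this is not readable off from \eqref{eq:alp_bet_asympt_chahn} together with $\alpha_n^{\text{CH}}$ being non-polynomial, but follows from the separate observation that a constant $\beta_n^{\text{CH}}$ would make $\delta_1=\delta_3\equiv0$ and hence $h\equiv0$.) A real sequence whose ratio $h_{n+1}/h_n$ converges to a nonzero limit cannot oscillate like $\cos(n\pi/2)$, so your ansatz collapses --- and with it the entire second half of your plan (separating trigonometric components and inspecting residues at $m=(1-s)/2$, $(2-s)/2$), which is built on that ansatz and is in any case not needed.

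The contradiction is in fact already in your hands and is much cheaper than you anticipate. The leading asymptotics give $\delta_1(n,m)=\tfrac14(\beta^{\text{CH}}_{m+1}-\beta^{\text{CH}}_m)\,n+O(1)$ and $\delta_3(n,m)=-\tfrac14(\beta^{\text{CH}}_{m+1}-\beta^{\text{CH}}_m)\,n+O(1)$, whence $h_{n+1}/h_{n-1}=\delta_1/\delta_3\to-1$; combined with $h_{n+1}/h_n\to D\in\mathbb{R}\setminus\{0\}$ this yields $D^2=-1$, impossible for real $D$. This is exactly how the paper concludes. The correct repair of your proof is therefore not to push the residue analysis through, but to recognize that your ``limiting recurrence $h_{n+1}+h_{n-1}=o(1)$'' and the convergence of $h_{n+1}/h_n$ to a nonzero real number are already mutually exclusive.
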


\begin{proof} Let us proceed by contradiction and assume that $\alpha_{n}^{\text{CH}}$
is not a polynomial in $n$ but (\ref{eq:a_b_H}) has a nonzero solution~$h_{n}$.
Without loss of generality we can suppose $h_{n}$ to be real. On
a neighborhood of infinity we can expand
\[
\alpha_{n}^{\text{CH}}=\frac{n}{4}+A_{0}+A(n),\,\ \text{where}\ A(n)=\sum_{k=1}^{\infty}\frac{A_{k}}{n^{k}}\,.
\]
By our assumption, $A(n)$ cannot vanish identically. Keeping notation~\eqref{eq:def_delta123},
with $\alpha_{n}^{\text{CH}}$ and $\beta_{n}^{\text{CH}}$ instead
of $\alpha_{n}^{\text{W}}$ and $\beta_{n}^{\text{W}}$, we have,
for a fixed $m$,
\begin{eqnarray*}
\delta_{2}(n,m) & := & -\det\!\left(\begin{array}{cc}
\alpha_{n}^{\text{CH}}-\alpha_{m}^{\text{CH}} & \alpha_{n-1}^{\text{CH}}-\alpha_{m-1}^{\text{CH}}\\
\alpha_{n-1}^{\text{CH}}-\alpha_{m+1}^{\text{CH}} & \alpha_{n-2}^{\text{CH}}-\alpha_{m}^{\text{CH}}
\end{array}\right)\\
 & = & \frac{A(m+1)-2A(m)+A(m-1)}{4}\,n+O(1),\ \ \text{as}\ n\to\infty.
\end{eqnarray*}
If $A(n)=A_{k}n^{-k}+O(n^{-k-1})$, with $A_{k}\neq0$, then
\[
A(m+1)-2A(m)+A(m-1)=A_{k}k(k+1)m^{-k-2}+O(m^{-k-3}),\,\ \text{as}\ m\to\infty.
\]
Hence for all sufficiently large $m$ there exists
\begin{equation}
\lim_{n\to\infty}\frac{\delta_{2}(n,m)}{n}=:C_{m}\in\mathbb{R\setminus}\{0\}.\label{eq:lim_delta2_chahn}
\end{equation}
In particular, there exists $R_{m}\geq0$ such that for all $n\geq R_{m}$,
$\delta_{2}(n,m)\neq0$.

One can argue, similarly as in the proof of Proposition~\ref{prop:nontriv_impl_polyn_wilson},
that the vectors (\ref{eq:vecs_h_delta}) are linearly dependent.
Moreover, for $m$ sufficiently large and $n\geq R_{m}$ both of them
are nonzero (note that $R_{m}$ can be chosen large enough so that
the coefficient $\alpha_{n}-\alpha_{m}$ occurring in (\ref{eq:a_b_H})
is nonzero for all $n\geq R_{m}$).

It follows that $\beta_{n}^{\text{CH}}$ cannot be a polynomial since
if so, $\beta_{n}^{\text{CH}}$ would be a constant as seen from~\eqref{eq:alp_bet_asympt_chahn},
and then $\delta_{1}(n,m)=\delta_{3}(n,m)=0$ identically. Consequently,
$h_{n}$ would vanish for all sufficiently large and hence for all
$n$. Thus, we can expand $\beta_{n}$ as a function of $n$ near
infinity as
\[
\beta_{n}^{\text{CH}}=B_{0}+B(n),\,\ \text{where}\ B(n)=\sum_{k=1}^{\infty}\frac{B_{k}}{n^{k}}\,,
\]
$B_{0}=\ii(a+b-c-d)/4$ and $B(n)$ does not vanish identically. This
property in particular implies that $\beta_{m+1}^{\text{CH}}-\beta_{m}^{\text{CH}}\neq0$
for all sufficiently large $m$.

Now we can check the asymptotic behavior of $\delta_{1}(n,m)$ and
$\delta_{3}(n,m)$ for $m$ fixed and $n$ large. We have, as $n\to\infty$,
\begin{align*}
\delta_{1}(n,m) & :=\det\!\left(\begin{array}{cc}
\beta_{n}^{\text{CH}}-\beta_{m}^{\text{CH}} & \alpha_{n-1}^{\text{CH}}-\alpha_{m-1}^{\text{CH}}\\
\beta_{n-1}^{\text{CH}}-\beta_{m+1}^{\text{CH}} & \alpha_{n-2}^{\text{CH}}-\alpha_{m}^{\text{CH}}
\end{array}\right)=\,\frac{\beta_{m+1}^{\text{CH}}-\beta_{m}^{\text{CH}}}{4}\,n+O(1),\\
\noalign{\smallskip}\delta_{3}(n,m) & :=\det\!\left(\begin{array}{cc}
\alpha_{n}^{\text{CH}}-\alpha_{m}^{\text{CH}} & \beta_{n}^{\text{CH}}-\beta_{m}^{\text{CH}}\\
\alpha_{n-1}^{\text{CH}}-\alpha_{m+1}^{\text{CH}} & \beta_{n-1}^{\text{CH}}-\beta_{m+1}^{\text{CH}}
\end{array}\right)=\,-\frac{\beta_{m+1}^{\text{CH}}-\beta_{m}^{\text{CH}}}{4}\,n+O(1).
\end{align*}
Fix $m$ sufficiently large. Then
\[
\lim_{n\to\infty}\frac{h_{n+1}}{h_{n-1}}=\lim_{n\to\infty}\frac{h_{n+m+1}}{h_{n+m-1}}=\lim_{n\to\infty}\frac{\delta_{1}(n,m)}{\delta_{3}(n,m)}=-1.
\]
In view of (\ref{eq:lim_delta2_chahn}), we also have
\[
\lim_{n\to\infty}\frac{h_{n+1}}{h_{n}}=\lim_{n\to\infty}\frac{h_{n+m+1}}{h_{n+m}}=\lim_{n\to\infty}\frac{\delta_{1}(n,m)}{\delta_{2}(n,m)}=\frac{\beta_{m+1}^{\text{CH}}-\beta_{m}^{\text{CH}}}{4C_{m}}=:D\in\mathbb{R}\setminus\{0\}.
\]
Then
\[
\lim_{n\to\infty}\frac{h_{n+1}}{h_{n-1}}=\lim_{n\to\infty}\frac{h_{n+1}}{h_{n}}\,\frac{h_{n}}{h_{n-1}}=D^{2}.
\]
Hence $D^{2}=-1$, a contradiction. \end{proof}

The next lemma summarizes configurations of the parameters when $\alpha_{n}^{\text{CH}}$
is a polynomial in~$n$.

\renewcommand{\theenumi}{\Roman{enumi}}%

\begin{lem}\label{lem:polyn_param_chahn} $\alpha_{n}^{\text{CH}}$
depends polynomially on~$n$ for the following values of parameters
only:
\begin{enumerate}
\item ${\displaystyle a=b=\frac{1}{4}+\ii t\;\mbox{ and }\;c=d=\frac{1}{4}-\ii t}$,~for
$t\in\R$, in which case
\[
\alpha_{n}^{\text{CH}}=\frac{1}{4}\left(n+\frac{1}{2}\right).
\]
\item ${\displaystyle a=b=\frac{3}{4}+\ii t\;\mbox{ and }\;c=d=\frac{3}{4}-\ii t}$,~for
$t\in\R$, in which case
\[
\alpha_{n}^{\text{CH}}=\frac{1}{4}\left(n+\frac{3}{2}\right).
\]
\item ${\displaystyle a=\frac{1}{4}+\ii t,\,b=\frac{3}{4}+\ii t\;\mbox{ and }\;c=\frac{1}{4}-\ii t,\,d=\frac{3}{4}-\ii t}$,~for
$t\in\R$, or one can interchange $a$ with $b$ and $c$ with $d$.
In both cases,
\[
\alpha_{n}^{\text{CH}}=\frac{1}{4}\left(n+1\right).
\]
\end{enumerate}
Moreover, $\beta_{n}^{\text{CH}}=\ii(a+b-c-d)/4$ in each case~(I)-(III).
\end{lem}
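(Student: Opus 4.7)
The plan is to analyze the rational function
\[
(\alpha_n^{\text{CH}})^2 = -A_n^{\text{CH}} C_{n+1}^{\text{CH}} = \frac{(n+1)(n+s-1)(n+a+c)(n+a+d)(n+b+c)(n+b+d)}{16\bigl(n + (s-1)/2\bigr)\bigl(n + s/2\bigr)^{\!2}\bigl(n + (s+1)/2\bigr)},
\]
with $s := a+b+c+d$, and characterize precisely when it is the square of a polynomial in~$n$. Using the standing assumptions $c = \bar{a}$, $d = \bar{b}$, set $\alpha := \Re a > 0$, $\beta := \Re b > 0$ and $\tau := \Im a - \Im b \in \R$; then $s = 2\alpha + 2\beta$ is real, the factors $(n+a+c) = n+2\alpha$ and $(n+b+d) = n+2\beta$ are real linear, and $(n+a+d)(n+b+c) = (n+\alpha+\beta)^2 + \tau^2$ as polynomials in~$n$.

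Since numerator and denominator have degrees $6$ and $4$, once the denominator fully cancels the result is a polynomial of degree~$2$ and $\alpha_n^{\text{CH}}$ is linear in~$n$. Every zero of the denominator must be matched by a zero of the numerator of at least equal multiplicity. The critical constraint is the double zero $n = -s/2$: the numerator evaluates there to $(1-\alpha-\beta)^2(\alpha-\beta)^2\tau^2$, so at least one of $\alpha+\beta = 1$, $\alpha = \beta$, or $\tau = 0$ must hold. If $\tau = 0$, the factor $(n+\alpha+\beta)^2$ automatically cancels $(n+s/2)^2$, and the polynomial condition reduces to $\bigl(n+(s-1)/2\bigr)\bigl(n+(s+1)/2\bigr) \mid (n+1)(n+s-1)(n+2\alpha)(n+2\beta)$. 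A short enumeration of ways to match the four real numerator roots $\{-1,-(s-1),-2\alpha,-2\beta\}$ to the pair $\{-(s-1)/2,-(s+1)/2\}$ yields exactly three admissible configurations: (a) $s = 1$; (b) $s = 3$; (c) $\{2\alpha, 2\beta\} = \{(s-1)/2,(s+1)/2\}$. Requiring the resulting quadratic to be a perfect square then forces $\alpha = \beta = 1/4$ in (a), $\alpha = \beta = 3/4$ in (b), and $s = 2$ with $\{\alpha, \beta\} = \{1/4, 3/4\}$ in (c); these are precisely cases (I), (II), (III), the two orderings in (c) accounting for the $a \leftrightarrow b$, $c \leftrightarrow d$ interchange. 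In each case the quotient simplifies to $(n+\alpha+\beta)^2/16$, giving the claimed formula for $\alpha_n^{\text{CH}}$.

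The remaining subcases $\alpha + \beta = 1$ or $\alpha = \beta$ with $\tau \neq 0$ must be shown inadmissible. In each, the factor $(n+s/2)^2$ of the denominator is instead cancelled by a purely real factor of the numerator (namely $(n+1)(n+s-1)$ when $s = 2$, or $(n+2\alpha)(n+2\beta)$ when $\alpha = \beta$), and the same divisibility analysis applies to the simple denominator zeros. However, the resulting quotient always retains the factor $(n+\alpha+\beta)^2 + \tau^2$, which is a perfect square of a polynomial only when $\tau = 0$, a contradiction. Finally, $\beta_n^{\text{CH}} = \ii(a+b-c-d)/4$ is verified by direct substitution into the defining expression $\beta_n^{\text{CH}} = \ii(A_n^{\text{CH}} + C_n^{\text{CH}} + a)$ and checking cancellation of all $n$-dependent terms. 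The principal difficulty is the combinatorial bookkeeping of the case analysis, which is streamlined by the observation that the complex-conjugate pair in the numerator contributes a real root only when $\tau = 0$; hence, whenever $\tau \neq 0$, every cancellation must be achieved with the four real linear factors, restricting the parameters to an explicitly enumerable shortlist.
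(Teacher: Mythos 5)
Your proposal is correct and follows essentially the same route as the paper: both write out $(\alpha_{n}^{\text{CH}})^{2}$ as an explicit rational function of $n$ under the reparametrization $\alpha=\Re a$, $\beta=\Re b$ (your $\tau$ is the paper's $\phi=\Im a-\Im b$), force $\tau=0$ because the non-real numerator roots cannot be cancelled by the all-real denominator, and then enumerate the remaining real cancellations and impose the perfect-square condition to land on exactly the configurations (I)--(III). The only difference is organizational: the paper concludes $\phi=0$ immediately from this root argument, whereas you first extract the trichotomy from the double zero at $n=-s/2$ and rule out $\tau\neq0$ afterwards; both fill in the same ``elementary inspection of possible cancellation'' that the paper leaves terse.
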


\renewcommand{\theenumi}{\arabic{enumi}}%

\begin{rem}\label{rem:alp-1_chahn} Note that $\alpha_{-1}^{\text{CH}}=0$
only in case~(III). \end{rem}

\begin{proof} Bearing in mind the restriction~\eqref{eq:param_assum_chahn},
we temporarily reparameterize the coefficients as follows:
\[
\alpha:=\Re a,\quad\beta:=\Re b,\quad\phi:=\Im a-\Im b,\quad\theta:=\Im a+\Im b,
\]
where $\alpha,\beta>0$ and $\phi,\theta\in\mathbb{R}$. As a shortcut
we again put $s:=a+b+c+d=2\alpha+2\beta$. We have
\begin{equation}
\left(\alpha_{n}^{\text{CH}}\right)^{2}=\frac{(n+1)(n+2\alpha)(n+2\beta)(n+s-1)(2n+s+2\ii\phi)(2n+s-2\ii\phi)}{4\,(2n+s-1)(2n+s)^{2}(2n+s+1)}.\label{eq:alpha_n_2_chahn}
\end{equation}

If $\alpha_{n}^{\text{CH}}$ is a polynomial in $n$, then it is of
degree $1$ with the leading coefficient equal to $1/4$ as is obvious
from~\eqref{eq:alp_bet_asympt_chahn}. In that case, it is immediately
seen from~\eqref{eq:alpha_n_2_chahn} that $\phi=0$ and hence
\[
\left(\alpha_{n}^{\text{CH}}\right)^{2}=\frac{(n+1)(n+2\alpha)(n+2\beta)(n+s-1)}{4\,(2n+s-1)(2n+s+1)}.
\]
Then by an elementary inspection of possible cancellation and requiring
the above expression to be the square of a polynomial in~$n$, one
readily finds that there are only four possible configurations for
$\alpha$ and $\beta$. Namely, $\alpha=\beta=1/4$, $\alpha=\beta=3/4$,
$\alpha=1/4$ and $\beta=3/4$, or $\alpha=3/4$ and $\beta=1/4$.
Moreover, in each case, $\beta_{n}^{\text{CH}}=-\theta/2$. The lemma
follows. \end{proof}

In summary, one implication of the equivalence in Theorem~\ref{thm:main}
ad~(iii) is established by Proposition~\ref{prop:polyn_impl_nontriv_chahn}.
Conversely, if $\Ch(\mathcal{J}^{\text{CH}})$ is nontrivial then
$\alpha_{n}^{\text{CH}}$ depends polynomially on~$n$, by Proposition~\ref{prop:nontriv_impl_polyn_chahn}.
In this case, $\beta_{n}^{\text{CH}}$ is a polynomial in~$n$, too,
as it follows from Lemma~\ref{lem:polyn_param_chahn}. The proof
of Theorem~\ref{thm:main} ad~(iii) is then completed by Proposition~\ref{prop:polyn_impl_nontriv_chahn},
Lemma~\ref{lem:polyn_param_chahn}, and Remark~\ref{rem:alp-1_chahn}.

\subsection{Jacobi}

The proof of Theorem~\ref{thm:main} ad (iv) asserting that $\Ch(\mathcal{J}^{\text{J}})$
is trivial for any admissible choice of the parameters $\alpha$,
$\beta$ is more elementary than in the foregoing cases but computationally
rather demanding. Despite the fact that the coefficients $\alpha_{n}^{\text{J}}$
and $\beta_{n}^{\text{J}}$ are given by less complicated expressions
derivations of some asymptotic formulas used in the course of the
proof are best done with the aid of a convenient computer algebra
system (CAS). To our opinion, any commonly used CAS will do.

In order to obtain slightly less complicated expressions, we use a
new parametrization: $c:=\alpha+\beta$ and $d:=\beta-\alpha$. The
coefficients~\eqref{eq:def_alpha_jacobi} and~\eqref{eq:def_beta_jacobi}
take the form
\begin{equation}
\alpha_{n}^{\text{J}}=\sqrt{\frac{(n+1)(2n+c+d+2)(2n+c-d+2)(n+c+1)}{(2n+c+1)(2n+c+2)^{2}(2n+c+3)}}\label{eq:def_alpha_jacobi_reparam}
\end{equation}
and
\begin{equation}
\beta_{n}^{\text{J}}=\frac{cd}{(2n+c)(2n+c+2)},\label{eq:def_beta_jacobi_reparam}
\end{equation}
where we can assume that $c>-2$ and $d\geq0$ without loss of generality.
Indeed, since $\alpha_{n}^{\text{J}}(\alpha,\beta)=\alpha_{n}^{\text{J}}(\beta,\alpha)$
and $\beta_{n}^{\text{J}}(\alpha,\beta)=-\beta_{n}^{\text{J}}(\beta,\alpha)$
for all $n\in\N_{0}$, a sequence $h_{n}$ is a solution of~\eqref{eq:a_b_H}
if and only if $(-1)^{n}h_{n}$ is a solution of the same equation
with interchanged parameters $\alpha$ and $\beta$. Therefore we
may assume $\beta\geq\alpha$.

Writing
\[
\alpha_{n}^{\text{J}}=\frac{1}{2}+A(n)
\]
we have
\begin{eqnarray}
A(n) & = & \frac{1-c^{2}-d^{2}}{16\,n^{2}}-\frac{(1-c^{2}-d^{2})(c+2)}{16\,n^{3}}\nonumber \\
\noalign{\smallskip} &  & +\left(\frac{(1-c^{2}-d^{2})(13\,c^{2}+d^{2}+48\,c+51)}{256}+\frac{c^{2}d^{2}}{64}\right)\!\frac{1}{n^{4}}+O\!\left(\frac{1}{n^{5}}\right)\!,\,\ \text{as}\ n\to\infty.\nonumber \\
\label{eq:AJasympt}
\end{eqnarray}
Furthermore,
\begin{equation}
\beta_{n}^{\text{J}}=\frac{cd}{4\,n^{2}}-\frac{cd\,(c+1)}{4\,n^{3}}+\frac{cd\,(3c^{2}+6\,c+4)}{16\,n^{4}}-\frac{cd\,(c^{3}+3\,c^{2}+4\,c+2)}{8\,n^{5}}+O\!\left(\frac{1}{n^{6}}\right)\!,\,\ \text{as}\ n\to\infty.\label{eq:betaJasympt}
\end{equation}

We again assume that a nontrivial real solution~$h_{n}$ to \eqref{eq:a_b_H}
exists, with the coefficients $\alpha_{n}$ and $\beta_{n}$ being
given by \eqref{eq:def_alpha_jacobi_reparam} and \eqref{eq:def_beta_jacobi_reparam}.
Moreover, we keep the notation \eqref{eq:def_delta123} using, however,
the coefficients $\alpha_{n}^{\text{J}}$, $\beta_{n}^{\text{J}}$
rather than $\alpha_{n}^{\text{W}}$, $\beta_{n}^{\text{W}}$. The
asymptotic formulas lead us to distinguish four cases.

\vskip10pt 1)~\textbf{Case $c^{2}+d^{2}=1$, $cd=0$.} Then we have
three possible configurations of the parameters: $c=0$, $d=1$ or
$c=-1$, $d=0$ or $c=1$, $d=0$. In any case $\alpha_{n}^{\text{J}}=1/2$
is a constant sequence. This means that the boundary condition $\alpha_{-1}^{\text{J}}=0$
is not fulfilled and therefore $\Ch(\mathcal{J}^{\text{J}})$ is trivial.

\vskip10pt 2)~\textbf{Case $c^{2}+d^{2}\neq1$, $cd=0$.} Then $\beta_{n}^{\text{J}}=0$
identically and equations \eqref{eq:a_b_H} and \eqref{eq:Hprime}
simplify to
\begin{eqnarray}
 &  & (\alpha_{n}-\alpha_{m})h_{n+m+1}+(\alpha_{n-1}-\alpha_{m-1})h_{n+m-1}=0,\nonumber \\
 &  & (\alpha_{n-1}-\alpha_{m+1})h_{n+m+1}+(\alpha_{n-2}-\alpha_{m})h_{n+m-1}=0.\label{eq:J-a_b_H-b0}
\end{eqnarray}
From \eqref{eq:AJasympt} it is seen that
\begin{equation}
\delta_{2}(n,m)=-\det\!\left(\begin{array}{cc}
\alpha_{n}^{\text{J}}-\alpha_{m}^{\text{J}} & \alpha_{n-1}^{\text{J}}-\alpha_{m-1}^{\text{J}}\\
\alpha_{n-1}^{\text{J}}-\alpha_{m+1}^{\text{J}} & \alpha_{n-2}^{\text{J}}-\alpha_{m}^{\text{J}}
\end{array}\right)=A(m+1)A(m-1)-A^{2}(m)+O\!\left(\frac{1}{n^{2}}\right)\!,\label{eq:delta_2_asympt_jacobi}
\end{equation}
as $n\to\infty$. Moreover,
\begin{equation}
A(m+1)A(m-1)-A^{2}(m)=\frac{(1-c^{2}+d^{2})^{2}}{128\,m^{6}}+O\!\left(\frac{1}{m^{7}}\right)\!,\,\ \text{as}\ m\to\infty.\label{eq:delta_2_lead_m_asympt_jacobi}
\end{equation}

Consequently, whenever \textbf{$c^{2}+d^{2}\neq1$} then for all sufficiently
large $m\in\mathbb{N}$ there exists $R_{m}\in\N$ such that $\delta_{2}(n,m)\neq0$
for all $n\geq R_{m}$. It readily follows from \eqref{eq:J-a_b_H-b0}
that $h_{n}=0$ for all sufficiently large $n$. Then necessarily
$h_{n}=0$ for all $n\in\mathbb{N}_{0}$ (see \eqref{eq:H_descend}),
a contradiction.

\vskip10pt 3)~\textbf{Case $c^{2}+d^{2}\neq1$, $cd\neq0$.} In
this case, too, we have $\delta_{2}(n,m)\neq0$ for $n\geq R_{m}$
provided $m$ is sufficiently large. Moreover, the asymptotic expansions
\eqref{eq:delta_2_asympt_jacobi}, \eqref{eq:delta_2_lead_m_asympt_jacobi}
are valid. In addition, as $n\to\infty$, we have
\begin{equation}
\delta_{1}(n,m)=\det\!\left(\begin{array}{cc}
\beta_{n}^{\text{J}}-\beta_{m}^{\text{J}} & \alpha_{n-1}^{\text{J}}-\alpha_{m-1}^{\text{J}}\\
\beta_{n-1}^{\text{J}}-\beta_{m+1}^{\text{J}} & \alpha_{n-2}^{\text{J}}-\alpha_{m}^{\text{J}}
\end{array}\right)=\beta_{m}^{\text{J}}A(m)-\beta_{m+1}^{\text{J}}A(m-1)+O\!\left(\frac{1}{n^{2}}\right)\label{eq:delta_1_asympt_jacobi}
\end{equation}
and
\begin{equation}
\delta_{3}(n,m)=\det\!\left(\begin{array}{cc}
\alpha_{n}^{\text{J}}-\alpha_{m}^{\text{J}} & \beta_{n}^{\text{J}}-\beta_{m}^{\text{J}}\\
\alpha_{n-1}^{\text{J}}-\alpha_{m+1}^{\text{J}} & \beta_{n-1}^{\text{J}}-\beta_{m+1}^{\text{J}}
\end{array}\right)=\beta_{m+1}^{\text{J}}A(m)-\beta_{m}^{\text{J}}A(m+1)+O\!\left(\frac{1}{n^{2}}\right)\!.\label{eq:delta_3_asympt_jacobi}
\end{equation}
Using (\ref{eq:AJasympt}), (\ref{eq:betaJasympt}) one derives that
\begin{equation}
\beta_{m}^{\text{J}}A(m)-\beta_{m+1}^{\text{J}}A(m-1)=\frac{cd\,(c^{2}+d^{2}-1)}{64\,m^{6}}+O\!\left(\frac{1}{m^{7}}\right)\!,\,\ \text{as}\ m\to\infty,\label{eq:delta_1_lead_m_asympt_jacobi}
\end{equation}
and
\begin{equation}
\beta_{m+1}^{\text{J}}A(m)-\beta_{m}^{\text{J}}A(m+1)=\frac{cd\,(c^{2}+d^{2}-1)}{64\,m^{6}}+O\!\left(\frac{1}{m^{7}}\right)\!,\,\ \text{as}\ m\to\infty.\label{eq:delta_3_lead_m_asympt_jacobi}
\end{equation}
Hence for all sufficiently large $m\in\mathbb{N}$ there exists $R_{m}\in\mathbb{N}$
such that for all $n\geq R_{m}$ and $j=1,2,3$, $\delta_{j}(n,m)\neq0$.

Similarly as in the proof of Proposition~\ref{prop:nontriv_impl_polyn_wilson}
we can infer from here some information about the asymptotic behavior
of $h_{n}$ as $n\to\infty$. We only sketch the basic steps. Fix
sufficiently large $m\in\mathbb{N}$ but otherwise arbitrary. The
vectors~\eqref{eq:vecs_h_delta} are linearly dependent and therefore
for all $n\in\mathbb{N}$, $n\geq m_{0}:=R_{m}+m$, we have
\[
h_{n+1}=\psi(n)h_{n},\ \,\text{with}\ \text{ }\psi(n):=\frac{\delta_{1}(n-m,m)}{\delta_{2}(n-m,m)}\,.
\]
From (\ref{eq:delta_2_asympt_jacobi}) and (\ref{eq:delta_1_asympt_jacobi})
it is seen that $\psi(n)=\lambda\big(1+O(n^{-2})\big)$ for some $\lambda\neq0$.
Whence
\begin{equation}
h_{n}=h_{m_{0}}\,\prod_{k=m_{0}}^{n-1}\psi(k)=\gamma\,\lambda^{\,n}\!\left(1+O\!\left(\frac{1}{n}\right)\!\right)\!,\,\ \text{as}\ n\to\infty,\label{eq:hJ_n-asympt}
\end{equation}
where $\gamma\neq0$ is a constant. Without loss of generality we
can assume that $\gamma=1$. Moreover, referring to (\ref{eq:delta_1_asympt_jacobi}),
(\ref{eq:delta_3_asympt_jacobi}),
\begin{equation}
\lambda^{2}=\lim_{n\to\infty}\,\frac{h_{n+1}}{h_{n-1}}=\lim_{n\to\infty}\,\frac{h_{n+m+1}}{h_{n+m-1}}=\lim_{n\to\infty}\,\frac{\delta_{1}(n,m)}{\delta_{3}(n,m)}=\frac{\beta_{m}^{\text{J}}A(m)-\beta_{m+1}^{\text{J}}A(m-1)}{\beta_{m+1}^{\text{J}}A(m)-\beta_{m}^{\text{J}}A(m+1)}\,.\label{eq:J-lambda-2}
\end{equation}
In regard of (\ref{eq:delta_1_lead_m_asympt_jacobi}), (\ref{eq:delta_3_lead_m_asympt_jacobi}),
letting $m\to\infty$ we obtain $\lambda^{2}=1$.

Equation (\ref{eq:a_b_H}) in this case means that
\[
\big(A(n)-A(m)\big)h_{n+m+1}+(\beta_{n}^{\text{J}}-\beta_{m}^{\text{J}})h_{n+m}+\big(A(n-1)-A(m-1)\big)h_{n+m-1}=0.
\]
From here, when considering the limit $n\to\infty$ while taking into
account (\ref{eq:hJ_n-asympt}) and (\ref{eq:AJasympt}), (\ref{eq:betaJasympt}),
we get $A(m)\lambda^{2}+\beta_{m}^{\text{J}}\lambda+A(m-1)=0$. Hence
for $\lambda=\pm1$ we have
\[
\pm\beta_{m}^{\text{J}}+A(m)+A(m-1)=0,\,\ m\in\mathbb{N}_{0}.
\]
Referring again to (\ref{eq:AJasympt}), (\ref{eq:betaJasympt}) and
checking the asymptotic expansion, as $m\to\infty$, we find that
\[
\pm\beta_{m}^{\text{J}}+A(m)+A(m-1)=-\frac{(c\mp d)^{2}-1}{8\,m^{2}}+O\!\left(\frac{1}{m^{3}}\right)\!.
\]
Hence $(c\mp d)^{2}=1$. With this equation the asymptotic expansion
simplifies and we obtain
\[
\pm\beta_{m}^{\text{J}}+A(m)+A(m-1)=\mp\frac{3cd}{16\,m^{4}}+O\!\left(\frac{1}{m^{5}}\right)\!.
\]
Owing to our assumption $cd\neq0$ we arrive at a contradiction.

\vskip10pt 4)~\textbf{Case $c^{2}+d^{2}=1$, $cd\neq0$.} Discussion
of this case is similar to the foregoing one, but some leading terms
in the above asymptotic expansions disappear and this is why we have
to reconsider some formulas. First of all, we now have, as $n\to\infty$,
\[
A(n)=\frac{c^{2}d^{2}}{64\,n^{4}}-\frac{(c+2)\,c^{2}d^{2}}{32\,n^{5}}+O\!\left(\frac{1}{n^{6}}\right)\!,
\]
while the asymptotic expansion (\ref{eq:betaJasympt}) of $\beta_{n}^{\text{J}}$
remains as it is. Furthermore, for a fixed $m\in\mathbb{N}_{0}$,
\[
\delta_{2}(n,m)=-\det\!\left(\begin{array}{cc}
\alpha_{n}^{\text{J}}-\alpha_{m}^{\text{J}} & \alpha_{n-1}^{\text{J}}-\alpha_{m-1}^{\text{J}}\\
\alpha_{n-1}^{\text{J}}-\alpha_{m+1}^{\text{J}} & \alpha_{n-2}^{\text{J}}-\alpha_{m}^{\text{J}}
\end{array}\right)=A(m+1)A(m-1)-A^{2}(m)+O\!\left(\frac{1}{n^{4}}\right)\!,
\]
and the asymptotic expansions (\ref{eq:delta_1_asympt_jacobi}), (\ref{eq:delta_3_asympt_jacobi})
remain valid as they are. The following asymptotic expansions, as
$m\to\infty$, had to be reconsidered, however:
\begin{eqnarray}
A(m+1)A(m-1)-A^{2}(m) & = & \frac{c^{4}d^{4}}{1024\,m^{10}}+O\!\left(\frac{1}{m^{11}}\right)\!,\nonumber \\
\beta_{m}^{\text{J}}A(m)-\beta_{m+1}^{\text{J}}A(m-1) & = & -\frac{c^{3}d^{3}}{128\,m^{7}}+O\!\left(\frac{1}{m^{8}}\right)\!,\label{eq:J-delta-asympt-lead-1}\\
\beta_{m+1}^{\text{J}}A(m)-\beta_{m}^{\text{J}}A(m+1) & = & \frac{c^{3}d^{3}}{128\,m^{7}}+O\!\left(\frac{1}{m^{8}}\right)\!.\nonumber 
\end{eqnarray}

From (\ref{eq:J-delta-asympt-lead-1}) we conclude that in this case,
too, for all sufficiently large $m\in\mathbb{N}$ there exists $R_{m}\in\mathbb{N}$
such that for all $n\geq R_{m}$ and $j=1,2,3$, $\delta_{j}(n,m)\neq0$.
It follows that the asymptotic analysis (\ref{eq:hJ_n-asympt}) of
$h_{n}$, as $n\to\infty$, is still applicable. Whence, up to a constant
multiplier, $h_{n}=\lambda^{n}\big(1+O(n^{-1})\big)$ for some $\lambda\in\mathbb{R}$,
$\lambda\neq0$. We can again compute $\lambda^{2}$ as in (\ref{eq:J-lambda-2})
with the same result, namely
\[
\lambda^{2}=\frac{\beta_{m}^{\text{J}}A(m)-\beta_{m+1}^{\text{J}}A(m-1)}{\beta_{m+1}^{\text{J}}A(m)-\beta_{m}^{\text{J}}A(m+1)}\,.
\]
This time letting $m\to\infty$ and taking into account (\ref{eq:J-delta-asympt-lead-1})
we get $\lambda^{2}=-1$ which is not possible.

\subsection{Meixner--Pollaczek, Meixner, Laguerre, Charlier and Hermite}

The remaining cases will be directly treated using a necessary condition
for the existence of a nontrivial commuting Hankel matrix.

Shifting indices in the general commutation equation~\eqref{eq:a_b_H},
one gets the equation~\eqref{eq:Hprime} and further
\[
(\alpha_{n-2}-\alpha_{m+2})h_{n+m+1}+(\beta_{n-2}-\beta_{m+2})h_{n+m}+(\alpha_{n-3}-\alpha_{m+1})h_{n+m-1}=0.
\]
These three equations can have a nontrivial solution only if
\begin{equation}
D(m,n):=\det\begin{pmatrix}\alpha_{n}-\alpha_{m} & \beta_{n}-\beta_{m} & \alpha_{n-1}-\alpha_{m-1}\\
\alpha_{n-1}-\alpha_{m+1} & \beta_{n-1}-\beta_{m+1} & \alpha_{n-2}-\alpha_{m}\\
\alpha_{n-2}-\alpha_{m+2} & \beta_{n-2}-\beta_{m+2} & \alpha_{n-3}-\alpha_{m+1}
\end{pmatrix}=0,\label{eq:def_Dmn}
\end{equation}
for all $m,n\in\N$, $n\geq2$. In particular, if $\beta_{n}=0$ for
all $n\in\N_{0}$, it follows from~\eqref{eq:a_b_H} that the necessary
condition for existence of a nontrivial commuting Hankel matrix reads
\begin{equation}
\delta_{2}(m,n):=-\det\begin{pmatrix}\alpha_{n}-\alpha_{m} & \alpha_{n-1}-\alpha_{m-1}\\
\alpha_{n-1}-\alpha_{m+1} & \alpha_{n-2}-\alpha_{m}
\end{pmatrix}=0,\quad\forall m,n\in\N.\label{eq:def_dmn}
\end{equation}

\subsubsection{Meixner--Pollaczek}

Assume first that $\phi\neq\pi/2$. The leading terms of the asymptotic
expansions of $\alpha_{n}^{\text{MP}}$ and $\beta_{n}^{\text{MP}}$
are
\[
\alpha_{n}^{\text{MP}}=\frac{n}{2\sin\phi}+O(1)\quad\mbox{ and }\quad\beta_{n}^{\text{MP}}=-\frac{n}{\tan\phi}+O(1),\,\ \text{as}\ n\to\infty.
\]
Consequently, for $m\in\N$ fixed,
\[
D(m,n)=C_{m}(\lambda,\phi)n+O(1),\quad n\to\infty.
\]
By a simple linear algebra the function $C_{m}(\lambda,\phi)$ can
be readily computed,
\begin{align*}
C_{m}(\lambda,\phi)=\frac{\cos\phi}{4\sin^{3}\phi}\big[ & (\tilde{\alpha}_{m+1}^{\text{MP}}-\tilde{\alpha}_{m}^{\text{MP}})^{2}-(\tilde{\alpha}_{m+2}^{\text{MP}}-\tilde{\alpha}_{m+1}^{\text{MP}})(\tilde{\alpha}_{m}^{\text{MP}}-\tilde{\alpha}_{m-1}^{\text{MP}})\\
 & +\tilde{\alpha}_{m+2}^{\text{MP}}-3\tilde{\alpha}_{m+1}^{\text{MP}}+3\tilde{\alpha}_{m}^{\text{MP}}-\tilde{\alpha}_{m-1}^{\text{MP}}\big],
\end{align*}
where $\tilde{\alpha}_{n}^{\text{MP}}:=2\sin(\phi)\alpha_{n}^{\text{MP}}=\sqrt{(n+1)(n+2\lambda)}\,$.
Further, the asymptotic expansion of the above expression in the square
brackets equals
\[
-\frac{(1-2\lambda)^{4}}{32m^{6}}+O\!\left(\frac{1}{m^{7}}\right)\!,\,\ m\to\infty.
\]
Consequently, if $\Ch(\mathcal{J}^{\text{MP}})$ is nontrivial, then
$C_{m}(\lambda,\phi)=0$ for all $m\in\N$ which particularly implies
that $\lambda=1/2$.

If $\phi=\pi/2$, then $\beta_{n}^{\text{MP}}=0$ for all $n\in\N_{0}$,
and one proceeds similarly using~\eqref{eq:def_dmn} instead of~\eqref{eq:def_Dmn}.
The resulting formula reads
\[
\delta_{2}(m,n)=c_{m}(\lambda)n+O(1),\,\ \text{as}\ n\to\infty,
\]
where
\[
c_{m}(\lambda)=\frac{(1-2\lambda)^{2}}{16m^{3}}+O\!\left(\frac{1}{m^{4}}\right)\!,\,\ \text{as}\ m\to\infty.
\]
Thus one again concludes that $\lambda=1/2$, if $\Ch(\mathcal{J}^{\text{MP}})$
is nontrivial.

Assume on the other hand that $\lambda=1/2$. Note that $\lambda=1/2$
if and only if $\alpha_{n}^{\text{MP}}$ is a nonzero polynomial in
$n$. Then $\alpha_{-1}^{\text{MP}}=0$. Plugging $\alpha_{n}^{\text{MP}}$
and $\beta_{n}^{\text{MP}}$, given by~\eqref{eq:def_alpha_beta_meixner-pollaczek}
with $\lambda=1/2$, into \eqref{eq:a_b_H} we arrive at the second
order difference equation
\[
h_{k+1}-2\cos(\phi)h_{k}+h_{k-1}=0,\quad k\in\N,
\]
whose two linearly independent solutions are
\[
h_{k}^{(1)}=\sin(k\phi)\quad\mbox{ and }\quad h_{k}^{(2)}=\cos(k\phi).
\]
Consequently, $\dim\Ch(\mathcal{J}^{\text{MP}})=2$ and the two Hankel
matrices determined by the sequences $h^{(1)}$ and $h^{(2)}$ form
a basis of $\Ch(\mathcal{J}^{\text{MP}})$.

\subsubsection{Meixner}

In all remaining cases, the approach is completely analogous to the
case of the Meixner--Pollaczeck polynomials. Therefore we mention
only several most important points. In the case of the Meixner coefficients
defined by~\eqref{eq:def_alpha_beta_meixner} one obtains
\[
\lim_{n\to\infty}\frac{D(m,n)}{n}=\frac{c(c+1)(1-\beta)^{4}}{32(1-c)^{3}m^{6}}+O\!\left(\frac{1}{m^{7}}\right)\!,
\]
as $m\to\infty$. Since $c\in(0,1)$ necessarily $\beta=1$, if $\Ch(\mathcal{J}^{\text{M}})$
nontrivial.

Assuming on the other hand that $\beta=1$ and plugging~\eqref{eq:def_alpha_beta_meixner}
into~\eqref{eq:a_b_H}, one arrives at the three-term recurrence
\[
h_{k+1}+(c^{-1/2}+c^{1/2})h_{k}+h_{k-1}=0,\quad k\in\N,
\]
whose two linearly independent solutions are
\[
h_{k}^{(1)}=(-1)^{k}c^{k/2}\quad\mbox{ and }\quad h_{k}^{(2)}=(-1)^{k}c^{-k/2}.
\]

\subsubsection{Laguerre}

Using~\eqref{eq:def_alpha_beta_laguerre} in~\eqref{eq:def_Dmn},
one computes
\[
\lim_{n\to\infty}\frac{D(m,n)}{n}=\frac{\alpha^{4}}{16m^{6}}+O\!\left(\frac{1}{m^{7}}\right)\!,
\]
as $m\to\infty$. Hence $\Ch(\mathcal{J}^{\text{L}})$ can be nontrivial
only for $\alpha=0$. On the other hand, if $\alpha=0$, one deduces
from~\eqref{eq:a_b_H} the recurrence
\[
h_{k+1}+2h_{k}+h_{k-1}=0,\quad k\in\N,
\]
whose two linearly independent solutions are
\[
h_{k}^{(1)}=(-1)^{k}\quad\mbox{ and }\quad h_{k}^{(2)}=(-1)^{k}k.
\]

\subsubsection{Charlier}

Note that $\alpha_{n}^{\text{C}}$ defined in~\eqref{eq:def_alpha_beta_charlier}
is never a polynomial in $n$ for $a\neq0$. In that case one obtains
\[
\lim_{n\to\infty}\frac{D(m,n)}{n}=\frac{a}{8m^{3}}+O\!\left(\frac{1}{m^{4}}\right)\!,\,\ \text{as}\ m\to\infty.
\]
Since $a\neq0$ determinant $D(m,n)$ cannot vanish identically and
therefore $\Ch(\mathcal{J}^{\text{C}})$ is trivial.

\subsubsection{Hermite}

Substituting from~\eqref{eq:def_alpha_beta_hermite} into the equation~\eqref{eq:a_b_H},
one obtains the second order recurrence
\[
h_{k+1}+h_{k-1}=0,\quad k\in\N,
\]
whose two linearly independent solutions are
\[
h_{k}^{(1)}=\sin\frac{k\pi}{2}\quad\mbox{ and }\quad h_{k}^{(2)}=\cos\frac{k\pi}{2}.
\]

\section*{Acknowledgments}

The authors acknowledge financial support by the Ministry of Education,
Youth and Sports of the Czech Republic project no. CZ.02.1.01/0.0/0.0/16\_019/0000778.


\end{document}